\newtheorem{thm}{Theorem}[section]
\newtheorem{lem}[thm]{Lemma}
\newtheorem{prop}[thm]{Proposition}
\newtheorem{cor}[thm]{Corollary}
\newtheorem{rem}[thm]{Remark}
\newtheorem{ex}[thm]{Example}
\newcommand{\RI}{R \!\Join\! I}
\newcommand{\R}{\mathcal R_+}
\newcommand{\m}{\mathfrak m}
\newcommand{\Z}{{\mathbb Z}}
\newcommand{\N}{{\mathbb N}}
\newcommand{\Ann}{{\rm Ann}}
\title{A family of quotients of the Rees algebra}
\author{V. Barucci \thanks{{\em email}: barucci@mat.uniroma1.it},
M. D'Anna \thanks{{\em email}: mdanna@dmi.unict.it}, F. Strazzanti
\thanks{{\em email}: strazzanti@mail.dm.unipi.it} }
\date{}
\begin{document}
\maketitle

\begin{center}
{\it Dedicated to Marco Fontana on occasion of his 65-th birthday}
\end{center}

\bigskip

\begin{abstract}
\noindent A family of quotient rings of the Rees algebra
associated to a commutative ring is studied.  This family
generalizes both the classical concept of idealization by Nagata
and a more recent concept, the amalgamated duplication of a ring.
It is shown that several properties of the rings of this family do
not depend on the particular member.
\medskip

\noindent MSC: 20M14; 13H10; 13A30.
\end{abstract}

\section*{Introduction}
Let $R$ be a commutative ring and let $M$ be an $R$-module; the
idealization, also called trivial extension, is a classical
construction introduced by Nagata (see \cite[page 2]{n},
\cite[Chapter VI, Section 25]{H} and \cite{fs}) that produces a
new ring containing an ideal isomorphic to $M$. Recently, D'Anna
and Fontana introduced the so-called amalgamated duplication (see
\cite{DF}, \cite{D'A}, studied also in, e.g., \cite{DF2},
\cite{m-y} and \cite{bsty}), that, starting with a ring $R$ and an
ideal $I$, produces a new ring that, if $M=I$, has many properties
coinciding with the idealization (e.g., they have the same Krull
dimension and if $I$ is a canonical ideal of a local
Cohen-Macaulay ring $R$, both of them give a Gorenstein ring). On
the other hand, while the idealization is never reduced, the
duplication can be reduced, but is never an integral domain.
Looking for a unified approach to these two constructions, D'Anna
and Re in \cite{DR} observed that it is possible to present both
of them as quotients of the Rees algebra modulo particular ideals.
This observation leaded to the subject of this paper, where we
study a more general construction, that produces a ring which, in
some cases, is an integral domain.

More precisely, given a monic polynomial $t^2+at+b \in R[t]$ and
denoting with $\R$ the  Rees algebra associated to the ring $R$ with
respect to the ideal $I$, i.e. $\R =\bigoplus_{n \geq 0}I^nt^n$,
we study the quotient ring $\R/(I^2(t^2+at+b))$, where
$(I^2(t^2+at+b))$ is the contraction to $\R$ of the ideal generated
by $t^2+at+b$ in $R[t]$. We denote such ring by $R(I)_{a,b}$.

%
%

In the first section we introduce the family of rings
$R(I)_{a,b}$, show that idealization and duplication are
particular cases of them (cf. Proposition \ref{isomorphisms}) and
study several general properties such as Krull dimension, total
ring of fractions, integral closure, Noetherianity and  spectrum.
In Section $2$ we assume that $R$ is local; in this case we prove
that the rings $R(I)_{a,b}$ have the same Hilbert function and
that they are Cohen-Macaulay if and only if $I$ is a maximal
Cohen-Macaulay $R$-module. We conclude this section proving that,
if $R$ is a Noetherian integral domain of positive dimension,
there exist infinitely many choices of $b$ such that the ring
$R(I)_{0,-b}$ is an integral domain. Finally in the last section
we study the one-dimensional case. If $R$ is local, Noetherian and
$I$ a regular ideal we find a formula for the CM type of $R(I)_{a,b}$
(cf. Theorem \ref{type}) and prove that it is Gorenstein if and
only if $I$ is a canonical ideal of $R$. Moreover, we show the
connection of the numerical duplication of a numerical semigroup
(see \cite{DS}) with $R(I)_{0,-b}$, where $R$ is a numerical
semigroup ring or an algebroid branch and $b$ has odd valuation
(see Theorems \ref{Semigroup ring} and \ref{Algebroid branch}).

\section{Basic properties}

Let $R$ be a commutative ring with unity and $I$ a proper ideal of $R$;
let $t$ be an indeterminate. The Rees algebra (also called Blow-up
algebra) associated to $R$ and $I$ is defined as the following
graded subring of $R[t]$:
$$\R =\bigoplus_{n \geq 0}I^nt^n \subseteq R[t].$$

\begin{lem}\label{intersection} Let $f(t) \in R[t] $ be a monic polynomial of degree $k$. Then
$$f(t)R[t] \cap \R=\{f(t)g(t); g(t) \in I^k \R\}$$
\end{lem}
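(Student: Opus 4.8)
The plan is to prove the two inclusions separately, with the containment $\supseteq$ being routine and $\subseteq$ carrying the real work. Throughout I write $f(t)=\sum_{i=0}^{k}f_i t^i$ with $f_k=1$, and I recall that $I^k\R=\bigoplus_{n\ge 0}I^{k+n}t^n$, so that $g(t)=\sum_n c_n t^n\in I^k\R$ exactly when $c_n\in I^{k+n}$ for every $n$.

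For the inclusion $\supseteq$, let $g(t)=\sum_n c_n t^n\in I^k\R$. Plainly $f(t)g(t)\in f(t)R[t]$, so it suffices to check that $f(t)g(t)\in\R$, i.e.\ that the coefficient of $t^m$ lies in $I^m$. This coefficient is $\sum_{i+j=m}f_i c_j$, and each summand satisfies $f_i c_j\in I^{k+j}$; since $i\le k$ forces $k+j\ge m$, we have $I^{k+j}\subseteq I^m$, and the inclusion follows.

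For the inclusion $\subseteq$, take $\phi(t)=\sum_n a_n t^n\in f(t)R[t]\cap\R$, so that $a_n\in I^n$ for all $n$. Because $f$ is monic, division by $f$ in $R[t]$ gives a \emph{unique} $h(t)=\sum_n b_n t^n\in R[t]$ with $\phi=f\cdot h$, and it remains to prove $h\in I^k\R$, that is $b_n\in I^{k+n}$ for every $n$. I would establish this by descending induction on $n$, starting from the top degree $d=\deg h$: comparing leading coefficients gives $b_d=a_{d+k}\in I^{d+k}$. For the inductive step, comparing coefficients of $t^{n+k}$ and using $f_k=1$ yields
\[
b_n = a_{n+k}-\sum_{i=0}^{k-1}f_i\,b_{n+k-i}.
\]
Here $a_{n+k}\in I^{n+k}$, while each index $n+k-i$ with $i\le k-1$ satisfies $n+k-i\ge n+1$, so by the induction hypothesis $b_{n+k-i}\in I^{k+(n+k-i)}\subseteq I^{k+n}$; hence every $f_i\,b_{n+k-i}\in I^{k+n}$ and therefore $b_n\in I^{k+n}$, completing the induction.

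The main obstacle is this reverse inclusion, and concretely the bookkeeping in the descending induction. The crucial feature exploited at each stage is that $f$ is monic: the leading term contributes $b_n$ exactly, letting one solve for $b_n$, while the remaining error terms are forced into higher powers of $I$ by the inductive hypothesis together with the elementary containment $I^{k+j}\subseteq I^{k+n}$ whenever $j\ge n$. Uniqueness of the quotient $h$ (again a consequence of monicity) is what guarantees that membership of $\phi$ in $f(t)R[t]$ is witnessed by a single well-defined $h$, so that the conclusion $h\in I^k\R$ is exactly the statement to be proved.
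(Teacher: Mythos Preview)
Your proof is correct and follows essentially the same approach as the paper: both arguments exploit monicity of $f$ to peel off coefficients of $g$ from the top degree downward, showing each lies in the appropriate power of $I$. The only cosmetic difference is that the paper phrases this as induction on $\deg g$ (subtracting off the leading monomial and reducing to a shorter $g$), whereas you phrase it as a descending induction on the coefficient index via an explicit recurrence for $b_n$; these are the same mechanism.
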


\begin{proof} Observe first that $I^k \R=\{\sum_{i=0}^nb_it^i; b_i \in I^{k+i}\}$.
It is trivial that each element of the form $f(t)g(t)$, with $
g(t) \in I^k \R$, is in $f(t)R[t] \cap \R$. Conversely, if $g(t)
\in R[t]$ and if $f(t)g(t) \in \R$, we prove by induction on the
degree of $g(t)$, that $g(t) \in I^k \R$. If the degree of $g(t)$
is zero, i.e. $g(t)=r \in R$, and if $f(t)r \in \R$, then the
leading term of $f(t)r$ is $rt^k$ and $r \in I^k \subset I^k\R$.
The inductive step: suppose that the leading term of $g(t)$ is
$h_nt^n$; thus the leading term of $f(t)g(t)$ is $h_nt^{k+n}$.  If
$f(t)g(t) \in \R$, then $h_n \in I^{k+n}$ and so $f(t)h_nt^n \in
\R$. It follows that, if $f(t)g(t) \in \R$, then $f(t)g(t)
-f(t)h_nt^n=f(t) \bar g(t) \in \R$, where deg$( \bar g(t))<n=\deg
(g(t))$. By inductive hypothesis $\bar g(t) \in I^k\R$, hence
$g(t)=\bar g(t)+h_nt^n \in I^k\R$.
\end{proof}

We denote the ideal of the previous lemma by $(I^kf(t))$.

\begin{lem}\label{banale} Let $f(t) \in R[t] $ be a monic polynomial of degree $k>0$.
Then each element of the factor ring $\R/(I^kf(t))$ is represented
by a unique polynomial of $\R$ of degree $<k$.
\end{lem}

\begin{proof} The euclidean division of an element $g(t)$ of $\R$ by the
monic polynomial $f(t)$ is always possible and gives
$g(t)=f(t)q(t)+r(t)$, with $\deg(r(t))<k$. Moreover, an easy
calculation shows that $q(t) \in I^k\R$ and $r(t) \in \R$. Thus
$g(t) \equiv r(t) \pmod {(I^kf(t))}$. Finally, if $r_1(t)$ and
$r_2(t)$ are distinct polynomials of $\R$ of degree $<k$, also deg
$(r_1(t)-r_2(t))<k$ and they represent different classes.
\end{proof}

It follows from Lemma \ref{banale} that the ring $R$ is a subring
of $\R/(I^kf(t))$.
\begin{prop} \label{dim}
The ring extensions $R \subseteq \R/(I^kf(t)) \subseteq
R[t]/(f(t))$ are both integral and the three rings have the same
Krull dimension.
\end{prop}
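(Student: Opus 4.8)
The plan is to make the two inclusions explicit, then reduce everything to the single fact that $R[t]/(f(t))$ is a finitely generated $R$-module; from this, both the integrality and the equality of dimensions follow from standard commutative algebra.

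First I would pin down the second inclusion. By Lemma~\ref{intersection} the ideal $(I^kf(t))$ is exactly the contraction $f(t)R[t]\cap\R$, so the second isomorphism theorem yields an injective ring homomorphism
$$\R/(I^kf(t)) = \R/\bigl(f(t)R[t]\cap\R\bigr)\hookrightarrow R[t]/(f(t)),$$
which realizes $\R/(I^kf(t))$ as a subring of $R[t]/(f(t))$; that $R$ itself sits inside $\R/(I^kf(t))$ was already noted after Lemma~\ref{banale}. To streamline the argument I would write $T=R[t]/(f(t))$ and $S=\R/(I^kf(t))$, so that we are dealing with a tower $R\subseteq S\subseteq T$.

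The key observation is that, since $f(t)$ is monic of degree $k>0$, the ring $T$ is a free $R$-module with basis $1,\bar t,\dots,\bar t^{\,k-1}$, where $\bar t$ denotes the class of $t$; in particular $T$ is module-finite, hence integral, over $R$. Integrality of both extensions then follows formally: every element of $S\subseteq T$ is integral over $R$, so $R\subseteq S$ is integral; and every element of $T$ is integral over $R$, hence a fortiori over the larger subring $S$, so $S\subseteq T$ is integral as well. Finally I would invoke the fact that an integral extension of commutative rings preserves Krull dimension — the combined content of Lying Over, Going Up and Incomparability (Cohen--Seidenberg). Applying this to $R\subseteq S$ gives $\dim R=\dim S$, and to $S\subseteq T$ gives $\dim S=\dim T$, so all three rings share the same Krull dimension. (Equivalently, one may argue directly that $\dim T=\dim R$ because $T$ is module-finite over $R$, and then sandwich $S$ between them.)

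I do not expect a serious obstacle: monicity of $f(t)$ is precisely what forces $T$ to be a free, hence integral, $R$-module, and once that is in hand both assertions are immediate. The only step requiring genuine care is the identification of $(I^kf(t))$ with the contraction $f(t)R[t]\cap\R$, since it is this that makes the map $S\hookrightarrow T$ truly injective rather than merely a homomorphism — and that identification is exactly what Lemma~\ref{intersection} supplies.
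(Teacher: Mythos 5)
Your proof is correct and follows essentially the same route as the paper: the inclusions come from Lemmas~\ref{intersection} and~\ref{banale}, integrality comes from the fact that the class of $t$ generates $R[t]/(f(t))$ as a module-finite (equivalently, integral) extension of $R$ because $f$ is monic, and equality of dimensions follows from the standard Cohen--Seidenberg theorems. Your version merely spells out more explicitly what the paper leaves as "by the two lemmas above" and "a well known theorem on integral extensions."
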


\begin{proof}
By the two lemmas above we have  the two inclusions. Moreover, the
class of $t$ in $R[t]/(f(t))$ is  integral over $R$ and over
$\R/(I^kf(t))$ as well. It follows that all the extensions are
integral. By a well known theorem on integral extensions, we get
that the three rings have the same dimension.
 \end{proof}

We observe now that, for particular choices of the polynomial
$f(t)$ above, we get known concepts.

 Recall that
the Nagata's idealization, or simply idealization,
of $R$ with respect to an ideal $I$ of
$R$ (that could be defined for any $R$-module $M$) is defined as
the $R$-module $R\oplus I$ endowed with the multiplication
$(r,i)(s,j)=(rs,rj+si)$ and it is denoted by $R \ltimes I$.

The duplication of $R$ with respect to $I$ is defined as follows:
$$
\RI = \{(r,r+i) \ | \ r \in R, \ i \in I \} \subset R\times R;
$$
note that $\RI \cong R \oplus I$ endowed with the multiplication
$(r,i)(s,j)=(rs,rj+si+ij)$.

\begin{prop} \label{isomorphisms}
We have the following isomorphisms of rings: \\{\rm1)}
$\R/(I^2t^2)\cong R\ltimes I$\\
{\rm 2)} $\R/(I^2(t^2-t))\cong  \RI$
 \end{prop}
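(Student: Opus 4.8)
The plan is to use Lemma \ref{banale} to pin down a canonical set of representatives and then simply read off the multiplication induced on them. Both $t^2$ and $t^2-t$ are monic of degree $k=2$, so Lemma \ref{banale} guarantees that every class in $\R/(I^2f(t))$ has a unique representative of the form $r+it$, where the constant coefficient $r$ lies in $I^0=R$ and the degree-one coefficient $i$ lies in $I^1=I$. This gives, in both cases, an $R$-module isomorphism $\varphi\colon \R/(I^2f(t))\to R\oplus I$ sending the class of $r+it$ to $(r,i)$. This map is visibly $R$-linear and bijective, so the whole content of the proposition is to check that it transports the quotient multiplication to the prescribed one on $R\oplus I$.

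First I would compute the product of two representatives inside $\R$, which is ordinary polynomial multiplication:
\[
(r+it)(s+jt)=rs+(rj+si)t+ij\,t^2 .
\]
The crucial observation is that $i,j\in I$ forces the top coefficient $ij$ to lie in $I^2$, so the monomial $ij\,t^2$ can be reduced using the defining ideal, and the reduced product again has degree $<2$ with coefficients in $R$ and $I$ respectively. For part 1), with $f(t)=t^2$, we have $ij\,t^2\in(I^2t^2)$ because $ij\in I^2$, hence $ij\,t^2\equiv 0$; the product reduces to $rs+(rj+si)t$, and $\varphi$ sends it to $(rs,rj+si)$, which is exactly the idealization product. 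For part 2), with $f(t)=t^2-t$, we have $ij(t^2-t)\in(I^2(t^2-t))$, so $ij\,t^2\equiv ij\,t$; the product reduces to $rs+(rj+si+ij)t$, mapped by $\varphi$ to $(rs,rj+si+ij)$, which matches the duplication multiplication recorded just before the statement. This yields both isomorphisms.

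I do not anticipate a genuine obstacle here; the argument is bookkeeping built directly on Lemma \ref{banale}. The only point deserving care is confirming that $ij\,t^2$ (respectively $ij(t^2-t)$) really lies in the \emph{contracted} ideal $(I^2f(t))=f(t)R[t]\cap\R$, and not merely in $f(t)R[t]$. This is precisely where the factor $I^k$ in the definition of the ideal is needed, and it is supplied by the coefficient condition $ij\in I^2$ coming from $i,j\in I$. Once this is in place, the verification that $\varphi$ is multiplicative is immediate, and since $\varphi$ is already a bijective $R$-module map sending $1$ to $(1,0)$, it is a ring isomorphism in each case.
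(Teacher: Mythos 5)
Your argument is correct and is essentially the paper's own proof: both use the unique degree~$<2$ representatives from Lemma~\ref{banale} and verify multiplicativity by reducing $ij\,t^2$ modulo $(I^2f(t))$, which is legitimate precisely because $ij\in I^2$. The only cosmetic difference is in part~2, where you land in $R\oplus I$ with the twisted multiplication $(r,i)(s,j)=(rs,rj+si+ij)$ while the paper maps $r+it\mapsto(r,r+i)$ directly into $\RI\subset R\times R$; these targets are identified by the isomorphism the paper records just before the statement, so the two proofs coincide.
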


\begin{proof} 1) For each residue class modulo $(I^2t^2)$, let $r+it \in \R$,
with $r \in R$ and $i \in I$, be its unique representative; the
map
$$\alpha: \R/(I^2t^2)\rightarrow R\ltimes I$$
defined setting $\alpha(r+it+(I^2t^2))=(r,i)$ is an isomorphism of
rings: as a matter of fact, $\alpha$ preserves sums and, if $r,s
\in R$, $i,j \in I$, we have
$\alpha((r+it+(I^2t^2))(s+jt+(I^2t^2)))=\alpha(rs+(rj+si)t+ijt^2+(I^2t^2))=\alpha(rs+(rj+si)t
+(I^2t^2))=(rs,rj+si)=(r,i)(s,j)$.\\
2) Similarly to 1), the map
$$ \beta:\R/(I^2(t^2-t))\rightarrow \RI$$
defined setting $\beta(r+it+(I^2(t^2-t)))=(r,r+i)$ is an
isomorphism of rings. As for the product, we have $\beta
((r+it+(I^2(t^2-t)))(s+jt+(I^2(t^2-t))))=\beta(rs+(rj+si)t+ijt^2+(I^2(t^2-t)))
= \beta(rs+(rj+si+ij)t
+ij(t^2-t)+(I^2(t^2-t)))=\beta(rs+(rj+si+ij)t  +(I^2(t^2-t))) =
(rs,rs+rj+si+ij)=(r,r+i)(s,s+j)$.
 \end{proof}

 \medskip
The previous proposition makes natural to consider the family
$R(I)_{a,b}=\R/(I^2(t^2+at+b))$, where $a, b \in R$. As $R$-module
$R(I)_{a,b} \cong R\oplus I$ and the natural injection $R
\hookrightarrow R(I)_{a,b}$ is a ring homomorphism; however
$0\oplus I$ in general (if $b\neq 0$) is not an ideal of
$R(I)_{a,b}$, although this happens for idealization and
duplication.

Both idealization and duplication can be realized in other cases.
\begin{prop}\label{othercases}{\rm 1)} If $t^2+at+b=(t-\alpha)^2$,
with $\alpha \in R$, then $R(I)_{a,b}\cong R \ltimes I$.\\ {\rm
2)} If $t^2+at+b=(t-\alpha)(t-\beta)$, with $(t- \alpha)$ and
$(t-\beta)$ comaximal ideals of $R[t]$, then $R(I)_{a,b} \cong R
\!\Join\! I$.
\end{prop}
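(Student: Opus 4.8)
The plan is to deduce both isomorphisms from Proposition \ref{isomorphisms} by a change of variable, realized as an $R$-algebra automorphism of the Rees algebra $\R$. The basic observation I would set up first is this: every affine substitution $\Phi\colon R[t]\to R[t]$, $t\mapsto ut+v$ with $u\in R$ a unit and $v\in R$, restricts to an $R$-algebra automorphism of $\R$. Indeed, $\R$ is generated as an $R$-algebra by the elements $it$ with $i\in I$, and $\Phi(it)=(iu)t+iv$ lies in $\R$ because $iu\in I$ and $iv\in I\subseteq R$; since $\Phi^{-1}$ has the same affine shape (with the unit $u^{-1}$), both $\Phi$ and $\Phi^{-1}$ carry $\R$ into $\R$, so $\Phi|_\R$ is an automorphism fixing $R$ pointwise.

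Once this is available, I would use that an automorphism of $R[t]$ sends the principal ideal generated by a polynomial $p(t)$ to the one generated by $\Phi(p(t))$, together with $\Phi(\R)=\R$, to compute via Lemma \ref{intersection}
\[
\Phi\bigl((I^2p(t))\bigr)=\Phi\bigl(p(t)R[t]\cap\R\bigr)=\Phi(p(t))R[t]\cap\R=\bigl(I^2\Phi(p(t))\bigr).
\]
Hence, whenever $\Phi$ carries $p(t)$ to a unit multiple of $t^2+at+b$, it induces a ring isomorphism $\R/(I^2p(t))\cong R(I)_{a,b}$. For part 1) I would take $\Phi(t)=t-\alpha$ (so $u=1$, $v=-\alpha$); then $\Phi(t^2)=(t-\alpha)^2=t^2+at+b$, giving $\R/(I^2t^2)\cong R(I)_{a,b}$, and Proposition \ref{isomorphisms}(1) yields $R(I)_{a,b}\cong R\ltimes I$.

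For part 2) the key point, and the only real obstacle, is to translate the comaximality hypothesis into the existence of the unit needed for the substitution. Evaluating a relation $f(t)(t-\alpha)+g(t)(t-\beta)=1$ at $t=\alpha$ shows that $\alpha-\beta$ is a unit of $R$; conversely, since $(t-\beta)-(t-\alpha)=\alpha-\beta$, this condition is clearly sufficient for comaximality. I would then set $u=(\beta-\alpha)^{-1}$ and $\Phi(t)=u(t-\alpha)$; a short check gives $\Phi(t)=u(t-\alpha)$ and $\Phi(t-1)=u(t-\beta)$, so that $\Phi(t^2-t)=u^2(t-\alpha)(t-\beta)$ is a unit multiple of $t^2+at+b$. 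Therefore $\Phi$ induces $\R/(I^2(t^2-t))\cong R(I)_{a,b}$, and Proposition \ref{isomorphisms}(2) gives $R(I)_{a,b}\cong\RI$.

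I expect the affine-automorphism verification to be entirely routine, so the substance of the argument is concentrated in the comaximality-equals-unit equivalence of part 2); everything else is bookkeeping that reduces the two cases to the already established idealization and duplication isomorphisms.
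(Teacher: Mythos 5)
Your proof is correct, and for part 1) it coincides with the paper's (the paper simply invokes the automorphism $t\mapsto t-\alpha$ without spelling out that it preserves $\R$, which you do verify). For part 2) you arrive at the same change of variable but by a different route: the paper first applies the Chinese Remainder Theorem to identify $R[t]/(t^2+at+b)$ and $R[t]/(t^2-t)$ with $R\times R$, composes the two isomorphisms, restricts to the subring to land in $R\!\Join\!J$ with $J=(\beta-\alpha)I$, and only at the very end shows $\beta-\alpha$ is a unit so that $J=I$; you instead establish the comaximality-iff-unit equivalence up front (by evaluating a B\'ezout relation at $t=\alpha$, where the paper uses the substitution $t\mapsto t+\beta$) and then run a single affine automorphism $t\mapsto(\beta-\alpha)^{-1}(t-\alpha)$ of $\R$ itself, matching the ideals $(I^2(t^2-t))$ and $(I^2(t^2+at+b))$ via Lemma \ref{intersection}. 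Your organization is arguably cleaner in that both parts are instances of one lemma (affine substitutions with unit leading coefficient preserve $\R$ and permute the ideals $(I^2f)$), it avoids CRT and the detour through $R\!\Join\!J$, and it makes explicit the ideal correspondence that the paper leaves implicit in part 1). The only cosmetic point is that $\Phi(t^2-t)=u^2(t^2+at+b)$ is not monic, so the notation $(I^2\Phi(p(t)))$ should be read as the ideal $\Phi(p(t))R[t]\cap\R=(t^2+at+b)R[t]\cap\R$, which is what you in fact use; this costs nothing since $u^2$ is a unit.
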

\begin{proof} 1) It is enough to consider the automorphism of $R[t]$,
induced by $t \mapsto t- \alpha$.\\
2) By Chinese Remainder Theorem, the map $\Phi: R[t]/(t^2+at+b)
\rightarrow R \times R$, defined by $\Phi( \overline {r+st})=(r+
\alpha s,r+ \beta s)$ is an isomorphism of rings, as well as the
map $\Psi: R[t]/(t^2-t) \rightarrow R \times R$, defined by $\Psi(
\overline {r+st})=(r ,r+  s)$, so that $$ \Psi^{-1} \circ
\Phi:R[t]/(t^2+at+b) \rightarrow R[t]/(t^2-t),$$  where
$(\Psi^{-1}\circ \Phi )( \overline {r+st})=\overline{(r+ \alpha s)
+ (\beta- \alpha)st}$ is also an isomorphism.  If we fix an ideal
$I$ of $R$ and if we restrict $ \Psi^{-1} \circ \Phi $ to the
subring $R(I)_{a,b}$, i.e. to the elements $\overline{r+it}$, with
$r \in R$ and $i \in I$, we get
\begin{align*}
R(I)_{a,b} \cong &
\{ \overline{(r+ \alpha i)+(\beta - \alpha)it}; \ r \in R, i \in I\} \\
= & \{\overline{r'+(\beta - \alpha)it}; \ r' \in R, i \in I\};
\end{align*}
the last ring is $R \!\Join\! J$, where $J= (\beta - \alpha)I$.
To finish the proof  we show that  $\beta - \alpha $ is invertible. 
In the authomorphism of $R[t]$ induced by $t \rightarrow t+ \beta$, 
the ideal $(t- \alpha, t- \beta)$ corresponds to 
$(t- \alpha+ \beta,t)=( \beta - \alpha,t)$ and this last ideal 
is $R[t]$ if and only if $\beta - \alpha$ is invertible.
\end{proof}

\begin{ex} \rm
Let $R= \mathbb Z$ and $t^2+at+b=t^2-5t+6=(t-2)(t-3)$. Then 
for each ideal $I$ of $\Z$, $\Z(I)_{-5,6} \cong \Z \!\Join\! I$.
\end{ex}

In this paper we study the family of rings of the form
$R(I)_{a,b}$, showing that many relevant properties are
independent by the member of the family. From now on, we denote
each element of $R(I)_{a,b}$ simply by $r+it$ ($r \in R$, $i\in
I$).

\begin{prop}\label{ringoffractions} Let
$Q$ be the total ring of  fractions of $R(I)_{a,b}$. Then each
element of $Q$ is of the form $r+it \over u$, where $u$ is a
regular element of $R$.
\end{prop}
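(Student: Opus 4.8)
The plan is to view $Q$ as the localization of $R(I)_{a,b}$ at its multiplicative set of regular elements, so that a typical element has the form $\frac{x}{y}$ with $x \in R(I)_{a,b}$ and $y=s+jt$ regular (where $s\in R$, $j\in I$). Since the numerator $x$ is already of the shape $r+it$, the whole content of the statement is to replace the denominator $s+jt$ by a regular element of $R$. The natural device is to rationalize by a conjugate, exactly as one does for a quadratic ring extension: the roots of $t^2+at+b$ sum to $-a$, so the relevant involution is $t\mapsto -a-t$.

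To make this precise I would introduce the conjugation $\overline{s+jt}:=(s-aj)-jt$, the image of $s+jt$ under the substitution $t\mapsto -a-t$. The first step is to check that this substitution defines a ring automorphism $\sigma$ of $R(I)_{a,b}$. For that I would verify that the automorphism $t\mapsto -a-t$ of $R[t]$ carries each graded component $I^nt^n$ of $\R$ into $\R$ — the coefficient of $t^k$ in $i_n(-a-t)^n$ lies in $a^{n-k}I^n\subseteq I^n\subseteq I^k$ for $k\le n$ — and fixes the polynomial $t^2+at+b$, hence preserves the ideal $(I^2(t^2+at+b))$ and descends to an automorphism of $R(I)_{a,b}$.

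The key computation is then the routine multiplication $(s+jt)\,\overline{(s+jt)}=s^2-asj+bj^2$ in $R(I)_{a,b}$, in which the coefficient of $t$ cancels; call this element $u$, so that $u\in R$. Because $\sigma$ is a ring automorphism, $\overline{y}=\sigma(y)$ is regular whenever $y$ is, so $u=y\,\overline{y}$ is a product of two regular elements and is therefore regular in $R(I)_{a,b}$; being an element of $R$ that is regular in the larger ring, $u$ is \emph{a fortiori} regular in $R$. Finally $x\,\overline{y}\in R(I)_{a,b}$ is of the form $r+it$, and from $x\,u=(x\,\overline{y})\,y$ together with the regularity of $y$ and of $u$ one obtains $\frac{x}{y}=\frac{x\overline{y}}{u}=\frac{r+it}{u}$ in $Q$, which is the presentation claimed.

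The only genuine obstacle is the well-definedness of the conjugation automorphism $\sigma$: once it is available, everything else is formal, since it is precisely $\sigma$ that upgrades the regularity of $y$ to regularity of the new denominator $u$. If one preferred to bypass $\sigma$, one would instead have to prove directly that $u=s^2-asj+bj^2$ is a non-zero-divisor, which seems no easier than checking the automorphism property; so I would keep the conjugation as the backbone of the argument.
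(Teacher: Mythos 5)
Your proof is correct, and it is built around the same rationalization trick as the paper's: multiply numerator and denominator by the conjugate of $s+jt$ (your $(s-aj)-jt$ is, up to sign, the paper's $ja-s+jt$), observe that the product $u$ lands in $R$, and check that everything in sight is regular. Where you genuinely diverge is in how the regularity of the conjugate is established. The paper argues directly at the level of annihilators: from the regularity of $s+jt$ it extracts the implication ``$xj=0\Rightarrow xs\neq 0$'' for $x\in R\setminus\{0\}$, uses this to show that $ja-s+jt$ is killed by no nonzero element of $R$, deduces that $u$ is regular in $R$, and only then concludes that the conjugate is regular in $R(I)_{a,b}$. You instead observe that $t\mapsto -a-t$ fixes $t^2+at+b$ and preserves $\R$ (and hence, via Lemma~\ref{intersection}, the contracted ideal $(I^2(t^2+at+b))$), so it descends to an involutive automorphism $\sigma$ of $R(I)_{a,b}$; regularity of $\sigma(y)$ is then automatic and $u=y\sigma(y)$ is a product of regular elements. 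Your route is cleaner and more conceptual -- it explains \emph{why} the conjugate must be regular rather than verifying it by hand -- at the price of the (routine but necessary, and correctly carried out) check that the substitution respects the graded pieces $I^nt^n$ of the Rees algebra. The paper's route is more elementary and stays entirely inside $R(I)_{a,b}$, never invoking the ambient construction. Both yield the same conclusion, including the fact that $u$ is regular in $R(I)_{a,b}$ and not merely in $R$, which is what makes the final identity $\frac{x}{y}=\frac{x\sigma(y)}{u}$ legitimate in $Q$.
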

\begin{proof}
Assume that $(s+jt)$ is a regular element of $R(I)_{a,b}$ and that \\
$(r+it)/(s+jt) \in Q$. Since $(s+jt)$ is regular, then  $x(s+jt)\neq 0$, for every $x \in R\setminus \{0\}$.
 Hence, $xj=0$ implies $xs \neq 0$.

Consider, now, the element $(ja-s+jt)$. To prove the Proposition,
it is enough to show that:

i)  the product $u=(s+jt)(ja-s+jt)$ is a regular element of $R$,

ii) $(ja-s+jt)$ is a regular element of $R(I)_{a,b}$.

In fact in this case we can write $(r+it)/(s+jt)=(r+it)(ja-s-jt)/u$.

Observing that $-at-t^2=b \in R$, we have $u=s(ja-s)-j^2b \in R$.
If $x(ja-s+jt)=0$ (for some $x \in R\setminus \{0\}$), then
$xj=0$, that implies $x(ja-s+jt)=xs=0$, a contradiction. Hence
$(ja-s+jt)$ is not killed by any non zero element of $R$; it
follows that $u$ is regular in $R$, otherwise there would exist
$x\in R\setminus \{0\}$ such that $ux=0$ that implies $(s+jt)$ not
regular in $R(I)_{a,b}$, since it is killed by $(ja-s+jt)x\neq 0$.
Thus i) is proved.

ii): if $(ja-s+jt)$ is not regular in $R(I)_{a,b}$, there exists
$(h+kt)\neq 0$ such that $(ja-s+jt)(h+kt)=0$. Hence $u(h+kt)
=(s+jt)(ja-s+jt)(h+kt)=0$, so $u$ is not regular in $R(I)_{a,b}$.
But then it is not regular in $R$; contradiction.
\end{proof}

\begin{cor} \label{same ring of fractions}
Assume that $I$ is a regular ideal; then the rings $R(I)_{a,b}$ and $R[t]/(t^2+at+b)$
have the same total ring of fractions and the same integral
closure.
\end{cor}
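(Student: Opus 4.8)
The plan is to exhibit a single regular element that serves as a common conductor between the two rings, and then to deduce both statements formally. Write $A=R(I)_{a,b}$ and $B=R[t]/(t^2+at+b)$, so that by Proposition \ref{dim} we have ring inclusions $R\subseteq A\subseteq B$ with $B$ integral over $A$. Since $I$ is regular it contains a regular element $d\in R$, and $d$ is the element I would use. The first step is the key observation that $dB\subseteq A$: writing a general element of $B$ as $r+st$ with $r,s\in R$, one has $d(r+st)=dr+(ds)t$, and $ds\in I$ because $d\in I$, so $d(r+st)\in A$. A direct check also shows that $d$ is a regular element of both $A$ and $B$, since $d(r+st)=0$ forces $dr=ds=0$ and hence $r=s=0$.

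Next I would compare the two total rings of fractions. The conductor relation gives $B\subseteq Q(A)$, because every $\beta\in B$ can be written as $\beta=(d\beta)/d$ with $d\beta\in A$ and $d$ a regular element of $A$ (consistently with the description of $Q(A)$ furnished by Proposition \ref{ringoffractions}). To upgrade this to an equality of total rings of fractions, I would match the regular elements of the two rings. If $a\in A$ is regular in $A$ and $a\beta=0$ for some $\beta\in B$, then $a(d\beta)=0$ with $d\beta\in A$, so $d\beta=0$ and, $d$ being regular in $B$, $\beta=0$; thus $a$ is regular in $B$, giving $Q(A)\subseteq Q(B)$. Conversely, if $\gamma\in B$ is regular in $B$, then $d\gamma$ is a product of elements regular in $B$ lying in $A$, hence regular in $A$, so that both $d\gamma$ and $d$ are invertible in $Q(A)$ and therefore $\gamma=(d\gamma)/d$ is invertible in $Q(A)$ as well; combined with $B\subseteq Q(A)$ this yields $Q(B)\subseteq Q(A)$. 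Hence $Q(A)=Q(B)=:Q$.

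Finally, the equality of integral closures is purely formal once both closures are taken inside the common ring $Q$. Since $A\subseteq B$, any element of $Q$ integral over $A$ is a fortiori integral over $B$, so $\overline A\subseteq\overline B$. Conversely $B$ is integral over $A$ by Proposition \ref{dim}, so by transitivity of integral dependence any element of $Q$ integral over $B$ is integral over $A$, giving $\overline B\subseteq\overline A$; hence $\overline A=\overline B$. I expect the only genuinely delicate point to be the identification of the regular elements of $A$ with those of $B$ in the second step, which is exactly where the regularity of the conductor $d$ (and thus the hypothesis that $I$ is a regular ideal) is indispensable.
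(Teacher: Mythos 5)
Your proof is correct and rests on the same key observation as the paper's: a regular element $d\in I$ conducts $R[t]/(t^2+at+b)$ into $R(I)_{a,b}$, so every element of the larger ring is a fraction $(d\beta)/d$ over the smaller one, and the regular elements of the two rings can then be matched in both directions. You package this a bit more systematically (using the conductor in place of the explicit description of $Q(R(I)_{a,b})$ from Proposition \ref{ringoffractions} for one of the two inclusions) and, unlike the paper's proof, you spell out the integral-closure half explicitly via transitivity of integral dependence, but the route is essentially the paper's.
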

\begin{proof} Each element of $R[t]/(t^2+at+b)$,
let's say $r+r_1t$ with $r,r_1 \in R$, is in $Q$, in fact if $i$
is an element of $I$ regular in $R$, $i$ is also regular in
$R[t]/(t^2+at+b)$ and $r+r_1t=(ir+ir_1t)/ i \in Q$. Moreover, if
$r+r_1t$ is regular in $R[t]/(t^2+at+b)$, it is also regular in
$Q$. In fact, according to Proposition \ref{ringoffractions}, an
element of $Q$ is of the form $(s+jt)/ u$ ($s \in R$, $j \in I$,
$u \in R$ and regular); if $(r+r_1t)(s+jt )/u=0$, then
$(s+jt)/u=0$. It follows that, if $(r+r_1t)/(s+s_1t)$ is an
element of $Q'$, the total ring of fractions of $R[t]/(t^2+at+b)$,
then $r+r_1t$ and $s+s_1t $ belong to $Q$ and  $s+s_1t $ is regular in $Q$,
so $(r+r_1t)/(s+s_1t) \in Q$. On the other hand, if $(r+it)/u\in
Q$, with $u \in R$ and regular in $R$, then $u$ is also regular in
$R[t]/(t^2+at+b)$ and $(r+it)/u\in Q'$.
\end{proof}

By Corollary \ref {same ring of fractions}, it follows that the
integral closure of $R(I)_{a,b}$ contains ${\overline
R}[t]/(t^2+at+b)$, where $\overline R$ is the integral closure of
$R$, but it may be strictly larger. For example, for $R=\Z$ and
$t^2+at+b=t^2+4$, we have that $\Z[t]/(t^2+4)$ is not integrally
closed, in fact $(t/2)^2+1=0$.

\medskip

Using the chain of inclusions $R \subseteq R(I)_{a,b} \subseteq
R[t]/(t^2+at+b)$ and the fact that these extensions are integral,
we can get information on Spec$(R(I)_{a,b})$ with respect to
Spec$(R)$.
\begin{prop} \label{Spec}
For each prime ideal $P$ of $R$, there are at most two prime
ideals of $R(I)_{a,b}$ lying over $P$. Moreover if $t^2+at+b$ is
irreducible on $R/\m$ for any maximal ideal $\m$ of $R$, then
there is exactly one prime ideal of $R(I)_{a,b}$ lying over $P$.
\end{prop}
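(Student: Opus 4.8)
The plan is to exploit the integral tower $R \subseteq R(I)_{a,b} \subseteq S$, where $S:=R[t]/(t^2+at+b)$, provided by Proposition \ref{dim}, and to transport a count of primes from the ``largest'' ring $S$, whose fibres are transparent, down to $R(I)_{a,b}$.

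First I would count the primes of $S$ lying over $P$. These are in natural bijection with the prime ideals of the fibre ring $S\otimes_R k(P)\cong k(P)[t]/\bigl(\overline{t^2+at+b}\bigr)$, where $k(P)$ denotes the residue field of $R$ at $P$, that is, the fraction field of $R/P$. This is a zero-dimensional quotient of the principal ideal domain $k(P)[t]$, so its primes correspond to the distinct monic irreducible factors of the degree-two polynomial $\overline{t^2+at+b}$ over $k(P)$; there are therefore at most two. Since the extension $R(I)_{a,b}\subseteq S$ is integral, lying over guarantees that every prime of $R(I)_{a,b}$ over $P$ is the contraction of some prime of $S$ over $P$, so the contraction map is a surjection from a set of cardinality at most two. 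This already yields the first assertion.

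For the second assertion it suffices to prove that, under the irreducibility hypothesis, $S$ has \emph{exactly} one prime over $P$: indeed, lying over for the integral extension $R\subseteq R(I)_{a,b}$ produces at least one prime over $P$, while the surjection above bounds their number by the number of primes of $S$ over $P$; hence a unique prime upstairs forces a unique prime in $R(I)_{a,b}$. Thus I must show that $\overline{t^2+at+b}$ is not a product of two distinct monic linear factors over $k(P)$.

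The crux is therefore the implication: if $\overline{t^2+at+b}=(t-\alpha)(t-\beta)$ with $\alpha\neq\beta$ in $k(P)$, then $t^2+at+b$ becomes reducible modulo some maximal ideal $\m\supseteq P$, contradicting the hypothesis. Each root $\alpha,\beta$ is integral over $R/P$, since it satisfies the monic polynomial $\overline{t^2+at+b}$; and once a root can be located in $R/P$ itself, say as the class of some $\gamma\in R$ with $\gamma^2+a\gamma+b\in P$, reduction modulo any maximal ideal $\m\supseteq P$ produces a root of $t^2+at+b$ in $R/\m$, making the polynomial reducible there. The main obstacle is precisely this descent of the factorisation from $k(P)$ to a residue field $R/\m$: a priori the roots live only in the fraction field $k(P)$, and I expect the delicate point to be ensuring that they, or at least the resulting linear splitting, are already detected in $R/P$ (hence in $R/\m$); this is automatic when $R/P$ is integrally closed in $k(P)$, because then the integral elements $\alpha,\beta$ lie in $R/P$. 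I would finally remark that a naive going-up argument does not close the gap, since two distinct primes of $R(I)_{a,b}$ over $P$ may lie beneath a single maximal ideal, so the contradiction must be extracted at the level of the factorisation rather than by lifting the two primes to two distinct maximal ideals.
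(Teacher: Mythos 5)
Your argument for the first assertion is correct and coincides with the paper's: the primes of $S:=R[t]/(t^2+at+b)$ lying over $P$ are counted by the distinct monic irreducible factors of the image of $t^2+at+b$ in $Q(R/P)[t]$ (the paper phrases this via preimages $\varphi_P^{-1}((f(t)))$ rather than via the fibre ring, but it is the same computation), and one then contracts along the integral extension $R(I)_{a,b}\subseteq S$. Your reduction of the second assertion to ``exactly one prime of $S$ over $P$'' is also the paper's reduction. But, as you say yourself, your treatment of that point is incomplete: you cannot descend a splitting of $t^2+at+b$ over $Q(R/P)$ to a splitting over $R/\mathfrak{m}$, because the roots are merely integral over $R/P$ and need not lie in $R/P$. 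So the proposal, as written, proves only the first assertion.

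You should know, however, that the gap you flagged is exactly the step the paper's own proof passes over in silence: the paper asserts, for primes $P'\subset P$, that ``the factorization of $t^2+at+b$ in $Q(R/P')[t]$ induces a factorization in $Q(R/P)[t]$'', and applies this with $P'=P$ and $P=\mathfrak{m}$; no justification is offered, and this is precisely your descent problem. In fact the descent is false without a normality hypothesis, and so is the second assertion of the proposition. Take $R=\mathbb{Z}[\sqrt{5}]_{(2,1+\sqrt{5})}$, a local Noetherian domain that is not integrally closed, let $t^2+at+b=t^2-t-1$ (so $a=b=-1$) and $I=\mathfrak{m}$. Over the residue field $R/\mathfrak{m}=\mathbb{F}_2$ the polynomial becomes $t^2+t+1$, which is irreducible, so the hypothesis of the proposition holds; yet over $Q(R)=\mathbb{Q}(\sqrt{5})$ it splits, with roots $\phi=(1+\sqrt{5})/2$ and $\bar{\phi}=(1-\sqrt{5})/2$ lying outside $R$. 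Consequently $S$ has two minimal primes, $\mathfrak{q}_1=\ker(t\mapsto\phi)$ and $\mathfrak{q}_2=\ker(t\mapsto\bar{\phi})$, both lying over $P=(0)$, and their contractions to $R(I)_{-1,-1}=R\oplus It$ are distinct: the element $x=-(1+\sqrt{5})+2t$ belongs to $R(I)_{-1,-1}$ (note $2\in I$) and lies in $\mathfrak{q}_1$, but it maps to $-2\sqrt{5}\neq 0$ under $t\mapsto\bar{\phi}$, so it is not in $\mathfrak{q}_2$. Hence $R(I)_{-1,-1}$ has two primes over $(0)$, not one. Your instinct was therefore sound: the delicate point you isolated is not a removable technicality but the actual obstruction; it disappears (and both your argument and the paper's become correct) if one assumes $R/P$ integrally closed in $Q(R/P)$ for every prime $P$, or if one restates the hypothesis as irreducibility of $t^2+at+b$ over $Q(R/P)$ for every prime $P$.
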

\begin{proof}
Every prime ideal of $R(I)_{a,b}$, lying over $P$ has to be the
contraction of a prime ideal of $R[t]/(t^2+at+b)$. It is well
known (see e.g. \cite[Chapter 6]{g-s}) that for every prime ideal
$P$ of $R$, $P[t]$ is a prime of $R[t]$ lying over $P$ and there
exist infinitely many other primes in $R[t]$, lying over $P$, all
of them containing $P[t]$ and with no inclusions among them. In
particular, there is a bijection between these ideals and the
nonzero prime ideals of $(Q(R/P))[t]$ (here $Q(R/P)$ denotes the
field of fractions of $R/P$); therefore the image of all these
prime ideals $J$ in $(Q(R/P))[t]$ is of the form $(f(t))$, for
some irreducible polynomial $f(t)$; hence
$J=\varphi_P^{-1}((f(t)))$, where $\varphi_P$ is the composition
of the canonical homomorphisms $R[t] \rightarrow (R/P)[t]
\hookrightarrow (Q(R/P))[t]$. Thus the prime ideals of
$R[t]/(t^2+at+b)$ lying over $P$ are of the form $J/(t^2+at+b)$,
with $J\supseteq (t^2+at+b)$. This means that the polynomial
$f(t)$, corresponding to $J$, divides the image of $t^2+at+b$ in
$Q(R/P)[t]$. Hence, if $t^2+at+b$ is irreducible in $Q(R/P)[t]$,
there is only one prime of $R[t]/(t^2+at+b)$ lying over $P$; on
the other hand, if $t^2+at+b$ has two distinct irreducible factors
in $(Q(R/P))[t]$, there exist exactly two prime ideals in
$R[t]/(t^2+at+b)$ lying over $P$. Hence there are at most two
primes in $R(I)_{a,b}$ lying over $P$ and the first part of the
proposition is proved.

Suppose that $J/(t^2+at+b)) \in$ Spec$(R[t]/(t^2+at+b))$ and
$J/(t^2+at+b)) \cap R=P$. We know that $J=\varphi_P^{-1}((f(t)))$,
where $f(t)$ is an irreducible factor of $t^2+at+b$ in
$Q(R/P)[t]$. If $P' \in$ Spec$(R)$, $P' \subset P$, then the prime
ideals of $R[t]/(t^2+at+b)$ lying over $P'$ correspond to the
irreducible factors of $t^2+at+b$ in $Q(R/P')[t]$; since the
factorization of $t^2+at+b$ in $Q(R/P')[t]$ induces a
factorization in $Q(R/P)[t]$, $f(t)$ is irreducible also in
$Q(R/P')[t]$ and we have a prime ideal of $R[t]/(t^2+at+b)$ lying
over $P'$ of the form $J'/(t^2+at+b)$, with
$J'=\varphi_{P'}^{-1}((f(t))) \subset J$. In particular, if $\m$
is a maximal ideal of $R$ containing $P$ and $t^2+at+b$ is
irreducible on $R/\m$, then there is one and only one prime ideal
of $R[t]/(t^2+at+b)$ lying over $P$ and the same happens for
$R(I)_{a,b}$  because the extension $R(I)_{a,b} \subseteq
R[t]/(t^2+at+b)$ is integral.
\end{proof}

\begin{rem}\label{Spec2}\rm
1) Notice that, for particular $a$ and $b$, the factorization of
$t^2+at+b$ in $Q(R/P)[t]$ may not depend on $P$. For example, in
the case of the idealization, the equality $t^2=t\cdot t$, implies
that there is only one prime lying over $P$, both in $R[t]/(t^2)$
and in the idealization. As for the case of the duplication, the
equality $t^2-t=t\cdot (t-1)$, implies that there are two primes
in $R[t]/(t^2-t)$ lying over $P$, namely $(P,t)$ and $(P, t-1)$.
Contracting these primes to the duplication we get the same prime
if and only if $P \supseteq I$ (see, e.g., \cite{DF}).

2) By the proof of Proposition \ref{Spec} we see that the
extension  $R \subseteq R[t]/(t^2+at+b)$ and the extension $R
\subseteq R(I)_{a,b}$ as well fulfill the going down property. In
particular a minimal prime of $R(I)_{a,b}$ lies over a minimal
prime $P$ of $R$.

3) The proof of the previous proposition also implies that a sufficient
condition for $R(I)_{a,b}$ to be an integral domain is that $R$ is
an integral domain and $t^2+at+b$ is irreducible in $Q(R)[t]$. We will see
in the next section that, under particular assumptions on $R$, we
can prove the existence of such polynomials.
%
\end{rem}

We conclude this section characterizing  the rings
$R(I)_{a,b}$ which are Noetherian.

\begin{prop} The following conditions are equivalent:\\
 {\rm (i)} $R$ is a Noetherian ring;\\
 {\rm (ii)} $R(I)_{a,b}$ is a Noetherian ring for all $a,b \in
 R$;\\
 {\rm (ii)} $R(I)_{a,b}$ is a Noetherian ring for some $a,b \in R$.

\end{prop}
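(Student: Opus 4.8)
The plan is to prove the cycle (i) $\Rightarrow$ (ii) $\Rightarrow$ (iii) $\Rightarrow$ (i), in which (ii) $\Rightarrow$ (iii) is immediate (specialize the universally quantified statement). The two substantial implications both exploit the $R$-module decomposition $R(I)_{a,b} \cong R\oplus I$ recorded above, where the inclusion $R\hookrightarrow R(I)_{a,b}$ identifies $R$ with the summand of elements $r+0t$, and where the projection $\pi\colon R(I)_{a,b}\to R$, $\pi(r+it)=r$, is an $R$-linear retraction (it is $R$-linear and restricts to the identity on $R$, even though it need not be a ring homomorphism when $b\neq 0$).

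For (i) $\Rightarrow$ (ii), if $R$ is Noetherian then the ideal $I$ is a finitely generated $R$-module, hence so is $R(I)_{a,b}\cong R\oplus I$. A ring that is module-finite over a Noetherian subring is itself Noetherian: $R(I)_{a,b}$ is then a Noetherian $R$-module, and since every ideal of $R(I)_{a,b}$ is in particular an $R$-submodule, it is finitely generated. As this argument uses nothing about $a,b$, it yields Noetherianity for all $a,b\in R$. (Equivalently, one could invoke $R[t]/(t^2+at+b)$, which is Noetherian by the Hilbert basis theorem and module-finite over $R$; then $R(I)_{a,b}$ is an $R$-submodule of it, hence a finitely generated $R$-module.)

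For (iii) $\Rightarrow$ (i), fix $a,b$ for which $S:=R(I)_{a,b}$ is Noetherian. I would avoid trying to apply the Eakin--Nagata theorem, since module-finiteness of $S$ over $R$ is precisely equivalent to $I$ being finitely generated, i.e.\ to what we are trying to establish, and so cannot be assumed. Instead, use the retraction $\pi$ to transfer chains. For an ideal $J$ of $R$ let $JS$ be the ideal it generates in $S$. Because $\pi$ is $R$-linear and each generator of $JS$ lies in $R$, one gets $\pi(JS)=J$: any $\sum_\ell j_\ell s_\ell\in JS$ maps to $\sum_\ell j_\ell\pi(s_\ell)\in J$, while $J=J\cdot 1\subseteq JS$ together with $\pi|_R=\mathrm{id}$ gives $J=\pi(J)\subseteq\pi(JS)$. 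Consequently $J\mapsto JS$ is an order-preserving injection from the ideals of $R$ to the ideals of $S$: if $J_1S=J_2S$ then $J_1=\pi(J_1S)=\pi(J_2S)=J_2$. Now any ascending chain $J_1\subseteq J_2\subseteq\cdots$ in $R$ produces the ascending chain $J_1S\subseteq J_2S\subseteq\cdots$ in $S$, which stabilizes since $S$ is Noetherian; applying $\pi$ shows the original chain stabilizes, so $R$ is Noetherian.

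The \emph{main obstacle} is exactly this last implication: the natural candidate tool (Eakin--Nagata) is unavailable because module-finiteness of $R(I)_{a,b}$ over $R$ is not known in advance. The crux is therefore to recognize that the $R$-module splitting $R(I)_{a,b}\cong R\oplus I$ furnishes an $R$-linear retraction onto $R$, which is enough to pull ascending chains back from $S$ to $R$ without any finiteness hypothesis on $I$.
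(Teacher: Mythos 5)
Your proof is correct. For (iii) $\Rightarrow$ (i) it is, at bottom, the same mechanism as the paper's: the paper extends a putatively non-finitely-generated ideal $J=(f_1,f_2,\dots)$ to $JR(I)_{a,b}$, writes $f_{s+1}=\sum_k f_k(r_k+i_kt)$, and reads off the ``constant term'' $f_{s+1}=\sum_k f_kr_k$ --- which is exactly your $R$-linear retraction $\pi$ applied to an element of $JS$. You have simply repackaged this as the statement $\pi(JS)=J$, turning the paper's proof by contradiction into a clean order-preserving injection of ideal lattices; that is arguably tidier and makes transparent why no finiteness of $I$ is needed. Where you genuinely diverge is (i) $\Rightarrow$ (ii): the paper observes that the Rees algebra $\R$ is Noetherian when $R$ is and that $R(I)_{a,b}$ is a quotient of it, whereas you use that $R(I)_{a,b}\cong R\oplus I$ is a finite $R$-module, hence a Noetherian $R$-module, and that ideals of $R(I)_{a,b}$ are $R$-submodules. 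Both are valid; the paper's version leans on the construction of $R(I)_{a,b}$ as a quotient of $\R$ (no need to invoke the degree-$<2$ normal form), while yours is more elementary and self-contained given Lemma \ref{banale}. Your remark that Eakin--Nagata is not applicable in the (iii) $\Rightarrow$ (i) direction is a fair and worthwhile observation.
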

\begin{proof}
If $R$ is Noetherian, also the Rees algebra $\R$ is Noetherian;
hence it is straightforward that $R(I)_{a,b}$ is Noetherian for
every $a,b \in R$, being a quotient of a Noetherian ring.

Since the condition (iii) is a particular case of (ii), we need to
prove only that (iii) implies (i). Assume by contradiction that
$R$ is not a Noetherian ring; then there exists an ideal $J=(f_1,
f_2, \dots )$ of $R$ that is not finitely generated and we can
assume that $f_{i+1} \notin (f_1, \dots f_i)$ for any $i$.
Consider the ideal $JR(I)_{a,b}$ of $R(I)_{a,b}$; by hypothesis,
it is finitely generated and its generators can be chosen from
those of $J$ (regarded as elements of $R(I)_{a,b}$). Hence we can
assume that $JR(I)_{a,b}=(f_1, \dots, f_s)$. This implies
$f_{s+1}= \sum_{k=1}^s f_k(r_k + i_k t)$, for some $r_k \in R$ and
$i_k \in I$, and therefore $f_{s+1}=\sum_{k=1}^s f_k r_k$;
contradiction.
\end{proof}

\section{The local case}

Assume that $R$ is local, with maximal ideal $\m$. Then it is
known that both $\RI$ and $R\ltimes I$ are local with maximal
ideals $\m \oplus I$ (in the first case under the isomorphism $\RI
\cong R \oplus I$). More generally:

\begin{prop}
$R$ is local if and only if $R(I)_{a,b}$ is  local. In this case
the maximal ideal of $R(I)_{a,b}$ is $\mathfrak m \oplus I$ (as
$R$-module).
\end{prop}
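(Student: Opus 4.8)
The plan is to prove the two implications separately, with essentially all the content in the ``$R$ local $\Rightarrow R(I)_{a,b}$ local'' direction. So first assume $R$ is local with maximal ideal $\m$, and set $\mathfrak M=\m\oplus I=\{r+it : r\in\m,\ i\in I\}$. Since $I$ is a proper ideal it is contained in $\m$, hence $I^2\subseteq\m$. I would begin by checking that the map $\pi\colon R(I)_{a,b}\to R/\m$ sending $r+it\mapsto r+\m$ is a surjective ring homomorphism. The only nontrivial point is multiplicativity: using the defining relation $t^2=-at-b$ (valid on $I^2$) one computes $(r+it)(s+jt)=(rs-bij)+(rj+si-aij)t$, whose image $rs-bij+\m$ equals $(r+\m)(s+\m)$ because $bij\in I^2\subseteq\m$. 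Thus $\mathfrak M=\ker\pi$ is a maximal ideal and $R(I)_{a,b}/\mathfrak M\cong R/\m$ is a field.

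It then remains to show $\mathfrak M$ is the \emph{only} maximal ideal, equivalently that every $x=r+it$ with $r\notin\m$ (so $r$ a unit of $R$) is a unit of $R(I)_{a,b}$. The key device is a conjugate: the element $\bar x=(r-ai)-it$ again lies in $R(I)_{a,b}$, and reducing $i^2t^2$ via $t^2=-at-b$ yields the norm computation $x\bar x=r^2-air+bi^2\in R$. Since $r$ is a unit while $air\in I\subseteq\m$ and $bi^2\in I^2\subseteq\m$, the element $N:=r^2-air+bi^2$ is congruent to $r^2\not\equiv 0$ modulo $\m$, hence is a unit of $R$; therefore $x^{-1}=N^{-1}\bar x\in R(I)_{a,b}$. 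Consequently the non-units of $R(I)_{a,b}$ are exactly the elements of $\mathfrak M$, so $R(I)_{a,b}$ is local with maximal ideal $\m\oplus I$.

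For the converse I would invoke Proposition \ref{dim}, by which the ring extension $R\subseteq R(I)_{a,b}$ is integral. For an integral extension every maximal ideal of the base ring is the contraction of some maximal ideal of the larger ring (lying over, combined with the fact that a prime of the top ring is maximal exactly when its contraction is). Hence if $R(I)_{a,b}$ has a unique maximal ideal $M$, every maximal ideal of $R$ must coincide with $M\cap R$, forcing $R$ to be local.

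I expect the main obstacle to be the forward direction, specifically exhibiting inverses for elements of unit scalar part. The naive idea of inverting ``modulo $I$'' fails because, as already noted after Proposition \ref{isomorphisms}, $0\oplus I$ need not be an ideal when $b\neq0$. The conjugate $\bar x$ is what circumvents this, and the two things to verify carefully are that $\bar x$ genuinely belongs to $R(I)_{a,b}$ and that the product $x\bar x$ collapses into $R$ after applying the relation on $I^2$; both hinge on the reduction $i^2t^2=-ai^2t-bi^2$.
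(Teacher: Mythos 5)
Your proof is correct and follows essentially the same route as the paper: your norm $N=r^2-air+bi^2$ is exactly the determinant $\delta=r^2-iar+i^2b$ of the linear system the paper solves for the inverse, and $N^{-1}\bar x$ is precisely that system's solution via Cramer's rule, while your converse (locality descends along the integral extension $R\subseteq R(I)_{a,b}$ of Proposition \ref{dim}) is identical to the paper's. The only addition on your side is the explicit check, via the quotient map $\pi$, that $\m\oplus I$ is a (maximal) ideal, which the paper leaves implicit.
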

\begin{proof} Let $R$ be local; we claim that all the elements
$r+it$ with $r \notin \m$ are invertible in $R(I)_{a,b}$. As a
matter of fact, looking for $s+jt$ such that $(r+it)(s+jt)=1$, we
obtain the linear system
$$
\left\{\aligned &rs-ibj=1 \\
&is+(r-ia)j=0
\endaligned
\right.
$$
which has determinant $\delta= r^2-iar+i^2b \in r^2+  \m$. Thus
$\delta$ is invertible in $R$; moreover, it is easy to check that
if $(s,j)$ is the solution  of the system, then $j \in I$; hence
$s+jt \in R(I)_{a,b}$ and it is the inverse of $r+it$.

Conversely, if $R(I)_{a,b}$ is local, $R$ has to be
local, since $R\subseteq R(I)_{a,b}$ is an integral extension (cf. Proposition \ref{dim}).
\end{proof}
It is also clear that, if $(R,\m)$ is local and if we denote by
$M$ the maximal ideal of $R(I)_{a,b}$, then $k=R/\m \cong
R(I)_{a,b}/M$. In the sequel, we will always denote with $k$ the
common residue field of $R$ and $R(I)_{a,b}$.

\begin{rem} \rm \label{length}
Since $R(I)_{a,b}$ is an $R$-algebra, every $R(I)_{a,b}$-module
$N$ is also an $R$-module and then $\lambda_{R(I)_{a,b}}(N) \leq
\lambda_R(N)$ (where $\lambda(\_ \ )$ denote the length of a
module).

If we consider a $R(I)_{a,b}$-module $N$ annihilated by $M$, we
have that, as $R$-module, $N$ is annihilated by $\m$. Hence it is
naturally an $R(I)_{a,b}/M$-vector space and an $R/\m$-vector
space; in particular,
$\lambda_{R(I)_{a,b}}(N)=\text{dim}_k(N)=\lambda_R(N)$ (where
$k=R/\m \cong R(I)_{a,b}/M$).
\end{rem}

For a Noetherian local ring $(R, \m)$, we denote by $\nu(I)$ the
cardinality of a minimal set of generators of the ideal $I$. The
embedding dimension of $R$, $\nu(R)$, is by definition $\nu(\m)$
and the Hilbert function of $R$ is
$H(n)=\lambda_R(\m^n/\m^{n+1})$.

\begin{prop} \label{Hilb} Let $(R,\m)$ be a Noetherian local ring. Then, for every $a,b \in R$
the rings $R(I)_{a,b}$ have the same Hilbert function. In
particular,   they have the same embedding dimension
$\nu(R(I)_{a,b})=\nu(R)+\nu(I)$ and the same multiplicity.
\end{prop}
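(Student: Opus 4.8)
The plan is to show that the maximal ideal $M=\m\oplus I$ of $R(I)_{a,b}$ has powers that do not depend on $a$ and $b$, and then to read off the Hilbert function from the graded pieces. Writing elements of $R(I)_{a,b}$ as $r+it$ with $r\in R$, $i\in I$, the maximal ideal is $M=\{m+it: m\in\m,\ i\in I\}$, which as an $R$-module is $\m\oplus I$.

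The key claim I would establish, by induction on $n$, is the equality of $R$-submodules of $R(I)_{a,b}\cong R\oplus I$
$$M^n=\{u+vt: u\in\m^n,\ v\in\m^{n-1}I\},$$
for every $n\geq 1$ and independently of $a,b$; here $\m^0=R$, so the base case $n=1$ is the description of $M$ above. For the inductive step I compute a typical product
$$(m+it)(u+vt)=(mu-b\,iv)+(mv+iu-a\,iv)\,t,$$
using the relation $i't^2=-a\,i't-b\,i'$ valid in $R(I)_{a,b}$ for $i'\in I^2$. The crucial point, and the only place where I must argue beyond bookkeeping, is that $R$ is local and $I$ is proper, so $I\subseteq\m$ and hence $I^2\subseteq\m I\subseteq\m^2$. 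Consequently the correction terms produced by $t^2$, namely $b\,iv\in\m^{n-1}I^2\subseteq\m^{n+1}$ and $a\,iv\in\m^{n-1}I^2\subseteq\m^n I$, are absorbed into the idealization-type terms $mu\in\m^{n+1}$ and $mv+iu\in\m^n I$. This gives $M^{n+1}\subseteq\m^{n+1}\oplus\m^n I$; the reverse inclusion follows by taking the factors with $i=v=0$ (which yield all of $\m^{n+1}$) and with $i=u=0$ (which yield $mv\,t$, spanning $\m^n I$ in the $t$-coordinate).

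Granting the claim, I obtain an isomorphism of $k$-vector spaces
$$M^n/M^{n+1}\cong \m^n/\m^{n+1}\oplus\m^{n-1}I/\m^n I,$$
whose summands are annihilated by $\m$. By Remark \ref{length} the length over $R(I)_{a,b}$ of a module killed by $M$ equals its $k$-dimension and its length over $R$, so
$$H(n)=\lambda_R(\m^n/\m^{n+1})+\lambda_R(\m^{n-1}I/\m^n I),$$
an expression manifestly independent of $a$ and $b$.

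Finally, the embedding dimension is $\nu(R(I)_{a,b})=\dim_k M/M^2=H(1)=\nu(\m)+\nu(I)=\nu(R)+\nu(I)$, using $\nu(\m)=\dim_k\m/\m^2$ and $\nu(I)=\dim_k I/\m I$ via Nakayama. For the multiplicity, I recall that $e(R(I)_{a,b})$ is determined by the Hilbert--Samuel function $\sum_{j\le n}H(j)$ together with the Krull dimension; since the values $H(j)$ coincide for all $a,b$ and the dimension is common by Proposition \ref{dim}, the multiplicities coincide as well. I expect the one genuine obstacle to be the inductive identification of $M^n$, and the resolution is exactly the containment $I^2\subseteq\m^2\cap\m I$ that neutralizes the dependence on $a$ and $b$.
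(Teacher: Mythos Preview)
Your proof is correct and follows essentially the same route as the paper: both identify $M^n$ with $\m^n\oplus\m^{n-1}I$ (using $I\subseteq\m$ to absorb the $a,b$-dependent terms), deduce $M^n/M^{n+1}\cong \m^n/\m^{n+1}\oplus \m^{n-1}I/\m^n I$, and invoke Remark~\ref{length} to pass from $R$-length to $R(I)_{a,b}$-length. The only difference is cosmetic: the paper works out the case $n=2$ explicitly and says ``arguing similarly'' for higher $n$, whereas you set up an explicit induction and spell out the embedding-dimension and multiplicity consequences.
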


\begin{proof} First of all, let us consider $M^2$; we have $M^2=\m^2+\m It$ (and hence, as $R$-module,
it is isomorphic to $\m^2 \oplus \m I$): in fact, if $(r+it)$ and
$(s+jt)$ are in $M$, then their product $rs-bij+(rj+si-aij)t \in
\m^2 \oplus \m I$. Conversely, pick an element in $\m^2 \oplus \m
I$ of the form $rs+uit$ (with $r,s,u \in \m$ and $i \in I$); we
have $rs+uit=rs+u(it) \in M^2$; since $\m^2 \oplus \m I$ is
generated by elements of this form we have the equality. Arguing
similarly for any $n \geq 2$, we immediately obtain that
$M^n=\m^n+\m^{n-1}I$ and, as $R$-module, it is isomorphic to
$\m^n\oplus \m^{n-1}I$.

It follows that, as  $R$-modules, $M^n/M^{n+1} \cong \m^n/\m^{n+1}
\oplus I\m^{n-1}/I\m^n$. By the previous remark the length of
$M^n/M^{n+1}$ as $R(I)_{a,b}$-module coincides with its dimension
as $k$-vector space and with its length as $R$-module. The thesis
follows immediately.
\end{proof}

\begin{rem} \rm
Let $R$ be a Noetherian local ring. By Propositions \ref{dim} and
\ref{Hilb} we get
$$\dim R(I)_{a,b} = \dim R \leq \nu(R) \leq \nu(R) + \nu(I) = \nu(R(I)_{a,b}).$$
The first inequality is an equality if and only if $R$ is regular
and the second if and only if $\nu(I)=0$, that is equivalent to
$I=0$, by Nakayama's lemma. This means that $R(I)_{a,b}$ is
regular if and only if $R$ is regular and $I=0$; clearly if $I=0$
one has $R(I)_{a,b}=R$.
\end{rem}

We want to show that, if $R$ is a local Noetherian integral domain, we can
always find integral domains in the family of rings $R(I)_{a,b}$.
The following proposition was proved in \cite{DR} and we publish
it with the permission of the second author.

\begin{prop}\label{prop:x^n-b}
Let $(R,\m)$ be a local Noetherian integral domain with $\dim R\geq 1$ and
$Q(R)$ its field of fractions. Then for any integer $n>1$, not multiple
of $4$, there exist infinitely many elements $b\in R$ such that the
polynomial $t^n-b$ is irreducible over $Q(R)$.
\end{prop}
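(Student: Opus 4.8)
The strategy is to translate irreducibility of $t^n-b$ into a statement about $p$-th powers in $Q(R)$, and then to rule out $p$-th powers by means of a discrete valuation. The first ingredient is the classical criterion of Capelli (Vahlen--Capelli; see e.g.\ Lang, \emph{Algebra}): for a field $K$ and $b\in K^\ast$, the binomial $t^n-b$ is irreducible over $K$ if and only if $b$ is not a $p$-th power in $K$ for every prime $p\mid n$, together with the extra requirement $b\notin -4K^4$ in the case $4\mid n$. Since we assume that $n$ is \emph{not} a multiple of $4$, this extra requirement is vacuous, so the problem reduces to producing infinitely many $b\in R$ with $b\notin Q(R)^p$ for each prime $p\mid n$.

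To detect non-$p$-th-powers I would use a $\Z$-valued valuation. Because $\dim R\ge 1$, I pick a nonzero nonunit $x\in\m$ and a prime $P$ minimal over $(x)$; by Krull's principal ideal theorem $P$ has height one, so $R_P$ is a one-dimensional Noetherian local domain with $Q(R_P)=Q(R)$. By the Krull--Akizuki theorem the integral closure of $R_P$ is again one-dimensional and Noetherian, and localizing it at a maximal ideal lying over $PR_P$ yields a DVR $V$ with $R\subseteq R_P\subseteq V\subseteq Q(R)$ and $Q(V)=Q(R)$. Writing $v\colon Q(R)^\ast\to\Z$ for its normalized valuation, we have $v\ge 0$ on $R$ and $v(x)>0$, so $v$ is nontrivial on $R$.

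Now comes the combinatorial heart. Since $Q(R)$ is the field of fractions of $R$ and $v(Q(R)^\ast)=\Z$, every element of $Q(R)^\ast$ is a quotient of elements of $R$, so the subgroup of $\Z$ generated by the value semigroup $S=v(R\setminus\{0\})$ is all of $\Z$; hence $\gcd(S)=1$. A submonoid of $\N$ whose elements have gcd one contains every sufficiently large integer, so $S$ is cofinite in $\N$ and therefore contains infinitely many integers $w$ with $\gcd(w,n)=1$. For each such $w$ I choose $b\in R$ with $v(b)=w$: if one had $b=\xi^p$ with $\xi\in Q(R)^\ast$ and $p\mid n$ prime, then $w=v(b)=p\,v(\xi)$ would be divisible by $p$, contradicting $\gcd(w,n)=1$. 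Thus $b\notin Q(R)^p$ for every $p\mid n$, and $t^n-b$ is irreducible over $Q(R)$ by Capelli's criterion. As distinct admissible values $w$ give elements $b$ of distinct valuation, hence distinct, we obtain infinitely many such $b$.

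The only genuinely delicate point is the existence of the valuation $v$, that is, the fact that a Noetherian local domain of positive dimension is dominated by a DVR of its fraction field; I expect this to be the main obstacle, and it is settled by the Krull--Akizuki argument above (equivalently, one may quote this domination statement directly). Everything else---the reduction via Capelli and the observation that a gcd-one submonoid of $\N$ is cofinite---is formal.
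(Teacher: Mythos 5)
Your proof is correct and follows essentially the same route as the paper's: the Capelli/Lang criterion for binomials (with the $4\nmid n$ hypothesis killing the $-4K^4$ case), a discrete valuation on $Q(R)$ obtained from a height-one prime via Krull--Akizuki, cofiniteness of the value semigroup $v(R\setminus\{0\})$ in $\N$, and the choice of $b$ with $v(b)$ coprime to $n$. The only differences are cosmetic: you justify cofiniteness via $\gcd=1$ where the paper notes that $v(R)$ contains two consecutive integers, and you spell out the existence of the height-one prime and the distinctness of the $b$'s, which the paper leaves implicit.
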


\begin{proof}
We will use the following well-known criterion of irreducibility:
if $b$ is not a $p$-th power for any prime $p|n$ and
$b\not\in-4Q(R)^4$ if $4|n$, then $t^n-b$ is irreducible (see
\cite[Chapter VI, Theorem 9.1]{lang:algebra}). In particular, if
$4$ does not divide $n$ and $b$ is not a $d$-th power for any
integer $d>1$ such that $d|n$, then $t^n-b$ is irreducible.

Taking a prime ideal $P\subset R$ such that $\mathrm{ht}\, P=1$ we
have $\dim R_P=1$ and, by the Krull-Akizuki Theorem, its integral
closure $\overline{R_P}$ of $R_P$ in $Q(R_P)=Q(R)$ is Noetherian
(see, e.g. \cite[Theorem 4.9.2]{h-s}), hence it is a Dedekind
ring. So there is at least a discrete valuation $v:
Q(R)^*\rightarrow \mathbb{Z}$ with
$v((\overline{R_P})_{M})=\mathbb{N}$ (with $M$ maximal ideal of
$\overline{R_P}$). Since $R \subseteq R_P \subseteq
(\overline{R_P})_M$ have the same field of fractions, it follows
that $v(R)\subseteq \mathbb{N}$ is a semigroup containing two
consecutive integers; so there exists $c>0$ such that any
$x\in\mathbb{N}$, $x\geq c$ belongs to $v(R)$.

In particular, there exist infinitely many elements $b\in R$ such
that $v(b)$ is prime to $n$, so $b$ cannot be a $d$-th power in
$Q(R)$ for any $d>1$ such that $d|n$. Hence we can find infinitely
many $b\in R$ such that $(t^n-b)\subset Q(R)[t]$ is irreducible.
\end{proof}

\begin{cor}
Let $R$ be a local Noetherian integral domain with $\dim R\geq 1$,
let $Q(R)$ be its field of fractions and let $I\subset R$ be an
ideal. Then there exist infinitely many elements $b\in R$ such
that $R(I)_{0,-b}$ is an integral domain.
\end{cor}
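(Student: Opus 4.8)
The plan is to combine Proposition~\ref{prop:x^n-b} with part 3) of Remark~\ref{Spec2}, which already tells us that $R(I)_{a,b}$ is an integral domain whenever $R$ is a domain and $t^2+at+b$ is irreducible in $Q(R)[t]$. Since we are working with the specific subfamily $R(I)_{0,-b}$, the relevant polynomial is $t^2-b$, so the whole problem reduces to producing infinitely many $b\in R$ for which $t^2-b$ is irreducible over the field of fractions $Q(R)$.

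First I would observe that $n=2$ satisfies the hypotheses of Proposition~\ref{prop:x^n-b}: it is an integer greater than $1$ and it is not a multiple of $4$. Applying that proposition directly with $n=2$ yields infinitely many elements $b\in R$ such that $t^2-b$ is irreducible over $Q(R)$. This is the entire arithmetic content, and it is already done for us.

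Next I would invoke the domain criterion. By part 3) of Remark~\ref{Spec2}, irreducibility of $t^2-b$ in $Q(R)[t]$ together with the assumption that $R$ is a domain guarantees that $R(I)_{0,-b}=\R/(I^2(t^2-b))$ is an integral domain. Running this implication over each of the infinitely many admissible values of $b$ produces infinitely many integral domains in the family, which is exactly the assertion of the corollary.

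I do not expect any genuine obstacle here, since the corollary is essentially an immediate specialization: the hard analytic work (the valuation-theoretic argument via Krull--Akizuki securing a value semigroup that contains all large integers, hence elements of every residue class prime to $n$) is packaged inside Proposition~\ref{prop:x^n-b}, and the translation from irreducibility to the domain property is packaged inside Remark~\ref{Spec2}. The only point worth stating explicitly is the verification that $n=2$ is a legal choice of exponent; everything else is a direct citation.
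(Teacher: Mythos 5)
Your proposal is correct and matches the paper's own proof exactly: the paper likewise applies Proposition~\ref{prop:x^n-b} with $n=2$ to obtain infinitely many $b$ with $t^2-b$ irreducible over $Q(R)$, and then concludes via point 3) of Remark~\ref{Spec2}. Nothing further is needed.
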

\begin{proof}
By Proposition \ref{prop:x^n-b} we can find $b$ such that
$(t^2-b)$ is irreducible in $Q(R)[t]$. The thesis now follows by
point 3) of Remark \ref{Spec2}.
\end{proof}

Now we want to investigate the Cohen-Macaulayness of $R(I)_{a,b}$.

Assume that $R$ is a CM ring; we set $\dim R=\mathrm{depth}\, R=d$;
moreover, $\Ann(R(I)_{a,b})=(0)$, hence the dimension of
$R(I)_{a,b}$ as $R$-module (i.e., since $R(I)_{a,b}$ is a finite
$R$-module, $\dim(R/\Ann(R(I)_{a,b}))$) equals the Krull dimension of
$R(I)_{a,b}$. We can assume that $d \geq 1$, otherwise both $R$
and $R(I)_{a,b}$ are trivially CM.

Given a regular sequence ${\bf x}=x_1, x_2, \dots , x_d$ of the
ring $R$, it is not difficult to
check that it is an $R(I)_{a,b}$-regular sequence if and only if
its image in $R(I)_{a,b}$ is a regular sequence of $R(I)_{a,b}$ as
a ring. Moreover, since ${\bf x}$ is a system of parameters of
$R$, then it is a system of parameters of $R(I)_{a,b}$ (since
$R\subseteq R(I)_{a,b}$ is an integral extensions) and ${\bf x}$
is a system of parameters for the $R$-module $R(I)_{a,b}$. Hence,
arguing exactly as in \cite{D'A} we have that $R(I)_{a,b}$ is a CM
ring if and only if it is a CM $R$-module.

Since $R(I)_{a,b} \cong R \oplus I$ as $R$-module, it follows that
$\mathrm{depth}\,(R\oplus I)=$ min$\{\mathrm{depth}\,I, \mathrm{depth}\, R\}= \mathrm{depth}\, I$
and therefore $R(I)_{a,b}$ is a CM $R$-module if and only if $I$
is a CM $R$-module of dimension $d$ (that is if and only if $I$ is
a maximal CM $R$-module).

Hence we can state the following:

\begin{prop}\label{CM}
Assume that $R$ is a local CM ring of dimension $d$. Then
$R(I)_{a,b}$ is CM if and only if $I$ is a CM $R$-module of
dimension $d$. In particular, the Cohen-Macaulayness of
$R(I)_{a,b}$ depends only on the ideal $I$.
\end{prop}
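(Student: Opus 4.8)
The plan is to assemble the chain of equivalences already sketched in the discussion preceding the statement into a single biconditional, reducing the Cohen--Macaulayness of the ring $R(I)_{a,b}$ to a depth computation for the $R$-module $R \oplus I$. Throughout I set $d = \dim R = \mathrm{depth}\, R$ and discard the trivial case $I = 0$, where $R(I)_{a,b} = R$ is already CM.

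First I would pass from the ring-theoretic notion to the module-theoretic one. Let $M = \m \oplus I$ be the maximal ideal of $R(I)_{a,b}$. Since $R \subseteq R(I)_{a,b}$ is a finite integral extension of local rings (Proposition \ref{dim}), $M$ is the unique prime lying over $\m$, so $\sqrt{\m R(I)_{a,b}} = M$ and $R(I)_{a,b}/\m R(I)_{a,b}$ is Artinian. Hence both the dimension and the depth of $R(I)_{a,b}$ are unchanged whether computed over the ring $R(I)_{a,b}$ with respect to $M$ or over $R$ with respect to $\m$: an element of $\m$ belongs to a system of parameters, respectively a regular sequence, for $R(I)_{a,b}$ as a ring exactly when it does so for $R(I)_{a,b}$ viewed as an $R$-module. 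Using $\Ann_R(R(I)_{a,b}) = 0$ to identify $\dim_R R(I)_{a,b}$ with the Krull dimension, this produces the equivalence ``$R(I)_{a,b}$ is a CM ring if and only if $R(I)_{a,b}$ is a CM $R$-module'', arguing as in \cite{D'A}.

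Next I would carry out the depth computation on the module side, using the $R$-module isomorphism $R(I)_{a,b} \cong R \oplus I$. The free summand pins the dimension down: $\dim_R(R \oplus I) = \max\{\dim R, \dim I\} = d$, since $\dim I \le d$. For depth, $\mathrm{depth}_R(R \oplus I) = \min\{\mathrm{depth}\, R, \mathrm{depth}\, I\} = \min\{d, \mathrm{depth}\, I\} = \mathrm{depth}\, I$, because $\mathrm{depth}\, I \le \dim I \le d$. Thus $R \oplus I$ is a CM $R$-module precisely when $\mathrm{depth}\, I = d$, that is, when $I$ is a maximal Cohen--Macaulay $R$-module. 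Combining with the previous step yields the stated biconditional, and the final clause is immediate because the right-hand condition refers only to $I$, not to $a$ or $b$.

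The step I expect to be the main obstacle is the first one: justifying that Cohen--Macaulayness as a ring coincides with Cohen--Macaulayness as an $R$-module. Its content is the transfer of depth and dimension across the finite local extension $R \subseteq R(I)_{a,b}$, which hinges on $\m R(I)_{a,b}$ being $M$-primary --- guaranteed by the integrality of Proposition \ref{dim} together with the locality of $R(I)_{a,b}$ established earlier. Once this compatibility is secured, the remaining depth formula for a direct sum of modules is entirely routine.
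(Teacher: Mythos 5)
Your proof is correct and takes essentially the same route as the paper: both reduce ring-theoretic Cohen--Macaulayness to Cohen--Macaulayness of $R(I)_{a,b}$ as an $R$-module via the finite integral extension $R \subseteq R(I)_{a,b}$ (arguing as in \cite{D'A}), and then use the $R$-module isomorphism $R(I)_{a,b}\cong R\oplus I$ together with $\mathrm{depth}\,(R\oplus I)=\min\{\mathrm{depth}\,R,\mathrm{depth}\,I\}=\mathrm{depth}\,I$ to conclude. Your explicit justification of the transfer step (uniqueness of the prime $M$ over $\m$ and $M$-primariness of $\m R(I)_{a,b}$) and your handling of the trivial case $I=0$ are just slightly more detailed versions of what the paper leaves implicit.
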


\begin{rem}\rm
We notice that if $I$ is a canonical ideal of $R$, since
$R(I)_{a,b}\cong R\oplus I$, we can apply a result of Eisenbud
(stated and proved in \cite{D'A}) to get that $R(I)_{a,b}$ is
Gorenstein for every $a, b \in R$.
We will see that in the one-dimensional case we can determine the
CM type of $R(I)_{a,b}$ and deduce that it is a Gorenstein ring if
and only if $I$ is a canonical ideal.
\end{rem}

\begin{rem} \rm
In \cite[Corollary 5.8]{DFF}, under the assumption that the ring
$(R,\mathfrak m)$ is a local CM ring with infinite residue field,
it has been proved the following formula about the multiplicity of
the duplication: $e(R\!\Join \!\! I)=e(R)+\lambda_R(I/IJ)$ (where
$J$ is any minimal reduction of $\mathfrak m$); in particular, if
$\dim R=1$, then $e(R\!\Join \!\! I)$ $=2e(R)$.

By Proposition \ref{Hilb} we can state that, under the same
assumptions, the same formulas hold for the multiplicity of
$R(I)_{a,b}$, for every $a,b \in R$.
\end{rem}

\section{One-dimensional case}
Assume for all this section that $(R,\mathfrak{m})$ is a one-dimensional,
Noetherian, and local ring and $I$ a regular ideal; in this
section we determine the CM type of $R(I)_{a,b}$.



Since $I$ is regular, it is a maximal Cohen-Macaulay $R$-module
and $R$ is a CM ring; therefore $R(I)_{a,b}$ is also CM by Proposition
\ref{CM}. In this case the type of $R(I)_{a,b}$ equals the length
of $(R(I)_{a,b}:M)/R(I)_{a,b}$ as $R(I)_{a,b}$-module, where $M$
is the maximal ideal of $R(I)_{a,b}$; so we start studying
$R(I)_{a,b}:M$.

\begin{lem}
For any $a,b \in R$, the $R(I)_{a,b}$-module $R(I)_{a,b}:M$ is equal to
 $$\left\{\frac{r}{s}+\frac{i}{s} \ t; \ \frac{i}{s} \in I:\m, \frac{r}{s} \in (I:I) \cap (R:\m)\right\}$$
\end{lem}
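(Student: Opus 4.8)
The plan is to compute the colon $R(I)_{a,b}:M$ directly inside the total ring of fractions $Q$ of $R(I)_{a,b}$. By Proposition \ref{ringoffractions} any $z \in Q$ can be written as $z = \frac{r+it}{s} = \xi + \eta t$, with $s$ a regular element of $R$, $r\in R$, $i\in I$, and $\xi = r/s,\ \eta = i/s$ in the total ring of fractions $Q(R)$ of $R$. The first step is to record that such a representation $z=\xi+\eta t$ with $\xi,\eta\in Q(R)$ is unique: since $t^2+at+b$ is monic, $R[t]/(t^2+at+b)$ is $R$-free on $\{1,t\}$, regular elements of $R$ remain regular in it, and hence $\{1,t\}$ stays $Q(R)$-independent in $Q=Q'$ (Corollary \ref{same ring of fractions}). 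Consequently membership in $R(I)_{a,b}=\{r+it : r\in R,\ i\in I\}$ is detected coordinatewise: $\xi+\eta t\in R(I)_{a,b}$ if and only if $\xi\in R$ and $\eta\in I$.

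Next, because $M=\m+It$ and every element of $M$ is a sum $m+it$ with $m\in\m$, $i\in I$, the inclusion $zM\subseteq R(I)_{a,b}$ is equivalent to the two inclusions $z\m\subseteq R(I)_{a,b}$ and $z(It)\subseteq R(I)_{a,b}$; it thus suffices to test $z$ against the elements $m\in\m$ and $jt$ with $j\in I$. Using commutativity, $zm=\xi m+(\eta m)t$, and using the defining relation, which gives $\eta j\,t^2=\eta j(-at-b)$ legitimately since $ij\in I^2$, one computes $z(jt)=-b\eta j+(\xi-a\eta)j\,t$. Reading off the two coordinates in each case and letting $m$ and $j$ range over $\m$ and $I$, I obtain the four raw conditions $\xi\in R:\m$, $\eta\in I:\m$, $\xi-a\eta\in I:I$, and $b\eta I\subseteq R$.

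The final step is to collapse these to the asserted, $a,b$-independent description; this reconciliation is the point of the lemma and the main obstacle, since the multiplication in $R(I)_{a,b}$ genuinely involves $a$ and $b$. The key observation is that $I$ is a proper ideal, hence $I\subseteq\m$, so $\eta\in I:\m$ forces $\eta I\subseteq\eta\m\subseteq I$, i.e.\ $\eta\in I:I$. From this, $a\eta\in I:I$ (as $a\eta I\subseteq aI\subseteq I$), so that $\xi-a\eta\in I:I$ is equivalent to $\xi\in I:I$; and $b\eta I\subseteq bI\subseteq R$ automatically, making the fourth condition redundant. The conditions therefore reduce exactly to $\eta\in I:\m$ and $\xi\in(I:I)\cap(R:\m)$, which is the claimed set, and in particular the colon does not depend on $a,b$.
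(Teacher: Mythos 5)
Your proof is correct and takes essentially the same route as the paper's: write a general element of $Q$ as $\frac{r}{s}+\frac{i}{s}t$ via Proposition \ref{ringoffractions}, multiply against the elements of $M$, read off the membership conditions coordinatewise, and collapse them to the $a,b$-independent ones using $I\subseteq\m$ (so that $\frac{i}{s}\in I:\m$ forces $\frac{i}{s}\in I:I$, killing the terms involving $a$ and $b$). The only differences are presentational: you test $m\in\m$ and $jt$ separately instead of a general $m+jt$, and you explicitly justify the coordinatewise criterion (uniqueness of the $\xi+\eta t$ representation via freeness of $R[t]/(t^2+at+b)$ over $R$), a point the paper leaves implicit.
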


\begin{proof}
Consider a generic element $r/s+(i/s)t$ of $Q(R(I)_{a,b})$,
where $r,s \in R, i \in I$ and $s$ is regular (cf. Proposition \ref{ringoffractions}). It is an element
of $R(I)_{a,b}:M$ if and only if
$$
\begin{array}{ll}
(r/s+(i/s)t)(m+jt)&=rm/s+(im/s)t+(rj/s)t+(ij/s)t^2\\
&=rm/s-ijb/s+(im/s+rj/s-ija/s)t
\end{array}
$$
is an element of $R(I)_{a,b}$, for any $m \in \m$ and for any $j
\in I$, that is $(rm/s-ijb/s) \in R$ and $(im/s + rj/s -ija/s) \in
I$.

Suppose that $r/s+(i/s)t \in R(I)_{a,b}:M$; in particular, if
$j=0$ we have $rm/s \in R$ and $im/s \in I$, that is $r/s \in
R:\m$ and $i/s \in I:\m$. Moreover since $ja \in I \subseteq \m$
and $i/s \in I:\m$, we have $im/s,ija/s \in I$, hence $rj/s \in I$
for any $j \in I$ and then $r/s \in I:I$.

Conversely, suppose that $i/s \in I: \m$ and $r/s \in (I:I) \cap
(R:\m)$. Then $rm/s-ijb/s \in R+I=R$ and $im/s +rj/s - ija/s \in
I+I+I=I$, consequently $r/s+(i/s)t \in R(I)_{a,b}:M$.
\end{proof}

\begin{thm} \label{type}
The CM type of $R(I)_{a,b}$ is
$$
t(R(I)_{a,b})=\lambda_R\left(\frac{(I:I)\cap
(R:\m)}{R}\right)+\lambda_R\left(\frac{I:\m}{I}\right);
$$
in particular, it does not depend on $a$ and $b$.
\end{thm}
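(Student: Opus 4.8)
The plan is to combine the preceding Lemma with the identity, recalled just above it, that the type of $R(I)_{a,b}$ equals $\lambda_{R(I)_{a,b}}\bigl((R(I)_{a,b}:M)/R(I)_{a,b}\bigr)$. Writing $S=R(I)_{a,b}$ for brevity, the first observation I would make is that the module $(S:M)/S$ is annihilated by $M$: indeed, by the very definition of the colon module in the total ring of fractions one has $M\cdot(S:M)\subseteq S$. Hence Remark \ref{length} applies and yields
$$\lambda_S\bigl((S:M)/S\bigr)=\lambda_R\bigl((S:M)/S\bigr),$$
which reduces the computation of the type (a length of $S$-modules) to a length computation of $R$-modules.

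Next I would read off the $R$-module structure of $S:M$ directly from the Lemma. An element of $S:M$ has the shape $\rho+\iota t$ with $\rho\in (I:I)\cap(R:\m)$ and $\iota\in I:\m$, and the two coordinates vary independently; thus, as an $R$-module,
$$S:M\cong \bigl((I:I)\cap(R:\m)\bigr)\oplus(I:\m).$$
On the other hand $S\cong R\oplus I$ as an $R$-module, and the inclusion $S\subseteq S:M$ respects these decompositions, since $R\subseteq (I:I)\cap(R:\m)$ (because $rI\subseteq I$ and $r\m\subseteq R$ for every $r\in R$) and $I\subseteq I:\m$ (because $i\m\subseteq I$ for every $i\in I$). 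Consequently the quotient splits as a direct sum of $R$-modules,
$$(S:M)/S\cong \frac{(I:I)\cap(R:\m)}{R}\oplus\frac{I:\m}{I}.$$

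Finally I would invoke additivity of length over this direct sum, together with the length identity from the first paragraph, to conclude
$$t(S)=\lambda_R\Bigl(\frac{(I:I)\cap(R:\m)}{R}\Bigr)+\lambda_R\Bigl(\frac{I:\m}{I}\Bigr).$$
Since the right-hand side involves neither $a$ nor $b$, the independence of the type from these parameters is immediate; in fact it already follows from the Lemma, which exhibits $S:M$ as the same subset of the total ring of fractions for every choice of $a$ and $b$.

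I do not anticipate a serious obstacle once the Lemma is in hand: the only step that is not purely formal is the passage from the length as an $S$-module (which is what the type is) to the length as an $R$-module (which is what the direct-sum decomposition computes), and this is exactly licensed by Remark \ref{length} because $(S:M)/S$ is killed by $M$. The remaining ingredients — the elementary colon inclusions $R\subseteq(I:I)\cap(R:\m)$ and $I\subseteq I:\m$, and the additivity of length — are routine, so the bulk of the work is really absorbed into the Lemma that precedes the statement.
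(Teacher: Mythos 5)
Your proposal is correct and follows essentially the same route as the paper: both rest on the preceding Lemma, pass from $\lambda_{R(I)_{a,b}}$ to $\lambda_R$ via Remark \ref{length} (your explicit observation that $M\cdot(R(I)_{a,b}:M)\subseteq R(I)_{a,b}$ justifies this), and then compute the length coordinatewise. The only cosmetic difference is that the paper packages the splitting as a surjective $R$-module homomorphism $R(I)_{a,b}:M \rightarrow \bigl((I:I)\cap(R:\m)\bigr)/R \times (I:\m)/I$ with kernel $R(I)_{a,b}$, whereas you phrase it as a direct-sum decomposition of $R(I)_{a,b}:M$ compatible with $R(I)_{a,b}\cong R\oplus I$; these are the same argument.
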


\begin{proof}
Consider the homomorphism $\varphi$ of $R$-modules
$$
\aligned
R(I)_{a,b}:M &\rightarrow \frac{(I:I) \cap (R:\m)}{R} \times \frac{I:\m}{I} \\
\frac{r}{s}+\frac{i}{s}t &\mapsto \left(\frac{r}{s}+R, \ \frac{i}{s}+I\right).
\endaligned
$$
Thanks to the previous lemma, $\varphi$ is well defined and
surjective; moreover, its kernel is given by the elements
$r/s+(i/s)t$ with $r/s \in R$ and $i/s \in I$, that is $\ker
\varphi=R(I)_{a,b}$; hence
$$
\frac{R(I)_{a,b}:M}{R(I)_{a,b}} \cong \frac{(I:I) \cap (R:\m)}{R} \times \frac{I:\m}{I}.
$$
Consequently, using Remark \ref{length}, we have

\begin{align*}
t(R(I)_{a,b})= &
\lambda_{R(I)_{a,b}}\left(\frac{R(I)_{a,b}:M}{R(I)_{a,b}}\right)=
\lambda_R\left(\frac{R(I)_{a,b}:M}{R(I)_{a,b}}\right)= \\
=& \lambda_R\left(\frac{(I:I) \cap (R:\m)}{R} \times \frac{I:\m}{I}\right)= \\
=&\lambda_R\left(\frac{(I:I) \cap (R:\m)}{R}\right)+\lambda_R\left(\frac{I:\m}{I}\right).
\end{align*}
\end{proof}

\begin{cor} \label{Gor}The ring $R(I)_{a,b}$ is Gorenstein if and only if $I$ is a canonical ideal of $R$.
\end{cor}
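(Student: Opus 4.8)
The plan is to reduce the statement to the classical criterion for Gorensteinness of a trivial extension. Since $I$ is a regular ideal it is a maximal Cohen--Macaulay $R$-module, so by Proposition \ref{CM} the ring $R(I)_{a,b}$ is Cohen--Macaulay; hence it is Gorenstein precisely when its Cohen--Macaulay type equals $1$. By Theorem \ref{type} this type is independent of $a$ and $b$, so in particular $t(R(I)_{a,b})=t(R(I)_{0,0})$, and by Proposition \ref{isomorphisms}(1) we have $R(I)_{0,0}=\R/(I^2t^2)\cong R\ltimes I$. Thus $R(I)_{a,b}$ is Gorenstein for some (equivalently, every) $a,b$ if and only if the idealization $R\ltimes I$ is Gorenstein. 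I would then invoke the well-known theorem (Reiten) that, over a Cohen--Macaulay local ring, the trivial extension $R\ltimes M$ is Gorenstein if and only if $M$ is a canonical module of $R$; applied to $M=I$ this gives exactly that $R(I)_{a,b}$ is Gorenstein iff $I$ is a canonical ideal.

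If one prefers to stay inside the type formula, I would argue as follows. Both summands in
$$t(R(I)_{a,b})=\lambda_R\!\left(\frac{(I:I)\cap(R:\m)}{R}\right)+\lambda_R\!\left(\frac{I:\m}{I}\right)$$
are nonnegative integers, and the key observation is that the second one is the Cohen--Macaulay type $r(I)$ of the module $I$. Indeed, inside the total ring of fractions $Q$ (cf. Proposition \ref{ringoffractions}) one identifies $(I:\m)/I$ with $\Hom_R(k,Q/I)$, and the short exact sequence $0\to I\to Q\to Q/I\to 0$ together with $\Hom_R(k,I)=\Hom_R(k,Q)=\Ext^1_R(k,Q)=0$ (these vanish because $Q$ is the localization of $R$ at its regular elements, which annihilates $k$, and $\m$ contains a nonzerodivisor) yields $(I:\m)/I\cong\Ext^1_R(k,I)$, of length $r(I)\geq 1$ since $\mathrm{depth}_R I=\dim R=1$. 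Hence $t(R(I)_{a,b})=1$ forces the first summand to vanish and $r(I)=1$, i.e. $(I:I)\cap(R:\m)=R$ and $r(I)=1$; substituting $a=b=0$ shows $t(R\ltimes I)=1$. For the converse, a canonical ideal satisfies $I:I=\Hom_R(I,I)=R$, so the first summand vanishes, while the same Ext computation gives $\Ext^1_R(k,\omega_R)\cong k$, so the second summand is $1$ and the type is $1$; this implication is also recorded, via Eisenbud's result, in the Remark preceding the corollary.

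The main obstacle is the final identification of these numerical conditions with $I$ being canonical: neither the vanishing of the first length nor $r(I)=1$ pins down $\omega_R$ on its own, and it is exactly their combination --- equivalently, the Gorensteinness of $R\ltimes I$ --- that does. This is where I would be most careful to cite a precise reference for the characterization of the canonical module of a one-dimensional Cohen--Macaulay local ring (Reiten's theorem on trivial extensions, or an equivalent colon-duality characterization of the canonical ideal) rather than attempt a self-contained derivation.
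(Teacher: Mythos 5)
Your proof is correct, but it takes a genuinely different route from the paper's. The paper argues directly from the formula of Theorem \ref{type}, combined with the Herzog--Kunz characterization (\cite{h-k}, Satz 3.3): a regular ideal $I$ of a one-dimensional CM local ring is canonical if and only if $\lambda_R((I:\m)/I)=1$. Since that second summand is always $\geq 1$ and both summands are nonnegative, $t(R(I)_{a,b})=1$ forces $\lambda_R((I:\m)/I)=1$, hence $I$ canonical; conversely, $I$ canonical gives $I:I=R$ (so the first summand vanishes) and second summand $1$. You instead use Theorem \ref{type} only through its corollary that the type is independent of $(a,b)$, specialize to $(a,b)=(0,0)$, identify $R(I)_{0,0}\cong R\ltimes I$ via Proposition \ref{isomorphisms}, and invoke Reiten's theorem on trivial extensions. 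This reduction is valid (Cohen--Macaulayness is also $(a,b)$-independent by Proposition \ref{CM}, so Gorensteinness of one member is equivalent to Gorensteinness of the idealization), and it has the merit of exhibiting the corollary as formally equivalent to Reiten's classical theorem; what it gives up is self-containedness relative to the paper's own tools, since it outsources the crux to an external theorem where the paper needs only its type formula and the already-cited Satz 3.3.

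One assertion in your closing paragraph is wrong, although it does not invalidate your argument: you claim that neither the vanishing of the first summand nor $r(I)=1$ pins down canonicity on its own, and that only their combination does. In fact $\lambda_R((I:\m)/I)=1$ \emph{alone} characterizes canonical ideals among regular ideals of a one-dimensional CM local ring --- that is exactly the content of Satz 3.3, with the colon taken in the total ring of fractions as in the Lemma preceding Theorem \ref{type}. A quick way to see it, granted a canonical ideal $\omega$ exists: writing $J^{\vee}=\omega:J$, duality gives
$$
\lambda_R\left(\frac{I:\m}{I}\right)=\lambda_R\left(\frac{I^{\vee}}{\m I^{\vee}}\right)=\nu(\omega:I),
$$
so length $1$ means $\omega:I=xR$ is principal, whence $I=\omega:(\omega:I)=x^{-1}\omega\cong\omega$. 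Thus the ``main obstacle'' you identify is precisely what the paper's citation disposes of, and the vanishing of the first summand in the forward direction is an automatic consequence of $t=1$, not an extra hypothesis needed to conclude canonicity.
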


\begin{proof} Recall first that a ring is Gorenstein if and only if it has CM type 1.
Recall also that $I$ is a canonical ideal of a one-dimensional CM
local ring $R$, i.e. an ideal $I$ such that $I:(I:J)=J$ for each
regular ideal $J$ of $R$, if and only if $\lambda_R((I:\m)/I)=1$
(cf. \cite {h-k}, Satz 3.3). Notice  that, for any ideal $I$
regular and proper, $\lambda_R((I:\m)/I)\geq 1$.

Thus, by the formula of Theorem \ref{type} we get: $R(I)_{a,b}$ is
Gorenstein if and only if $t(R(I)_{a,b})=1=0+1$; hence,
$\lambda_R((I:\m)/I)=1$, i.e. $I$ is a canonical ideal.
Conversely if $I$ is a canonical ideal, then $I:I=R$ and
$\lambda_R((I:\m)/I)=1$; by the same formula we get
$t(R(I)_{a,b})=0+1=1$, i.e. $R(I)_{a,b}$ is Gorenstein.
\end{proof}

We conclude this section studying two particular cases of one
dimensional rings: numerical semigroup rings and algebroid
branches; in both cases we show the connection with the numerical
duplication of  a numerical semigroup (see \cite{DS}).

Recall that a numerical semigroup is  a submonoid of $\N$ with
finite complement in $\N$ and it can be expressed in terms of its
minimal set of generators, $S= \langle n_1, \dots,n_\nu \rangle$,
with GCD$(n_1,\dots, n_\nu)=1$. A semigroup ideal $E$ is a subset
of $S$ such that $S+E \subseteq E$. We set $2 \cdot E= \{2s| \ s
\in E\}$. According to \cite{DS}, the numerical duplication of $S$
with respect to a semigroup ideal $E$ of $S$ and an odd integer
$m\in S$ is the numerical semigroup
$$S\!\Join^m\!\!E=2\cdot S \cup (2\cdot E+m).$$
A numerical semigroup ring is a ring $R$ of the form
$k[[S]]=k[[X^{n_1}, \dots,X^{n_\nu}]]$, where $k$ is
a field and $X$ an indeterminate. Such a ring is a
one-dimensional, Noetherian, local integral domain; moreover,
it is analytically irreducible (i.e. its integral closure
$\overline R$ is a DVR, which is a finite $R$-module) and
in this case ${\overline R}=k[[X]]$, the ring of formal power
series. The valuation $v$ induced by $k[[X]]$ on $k[[S]]$ is given
by the order of a formal power series and, if $I$ is an ideal of
$k[[S]]$, $v(I)=\{ v(i); \ i \in I, i \neq 0 \}$ is a semigroup
ideal of $S$.

\begin{thm} \label{Semigroup ring}
Let $R=k[[S]]$ be a numerical semigroup ring, let
$b=X^m \in R$, with $m$ odd, and let $I$ be a proper ideal of
$R$. Then $R(I)_{0,-b}$ is isomorphic to the semigroup ring
$k[[T]]$, where $T=S\!\Join^m\!\!v(I)$.
\end{thm}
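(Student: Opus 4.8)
The plan is to realize $B:=R(I)_{0,-b}$ inside its normalization, recognize that normalization as a power series ring $k[[Y]]$, and then identify $B$ with the monomial subring $k[[T]]$ after straightening a change of uniformizer. First I would check that $B$ is a complete, analytically irreducible, residually rational one-dimensional local domain. Since $m$ is odd, $X^m$ has odd $X$-adic value and hence is not a square in $Q(R)=k((X))$, so $t^2-X^m$ is irreducible over $Q(R)$ and $B$ is a domain by point 3) of Remark \ref{Spec2}. In $Q(B)=k((X))[t]/(t^2-X^m)$ the element $Y:=t/X^{(m-1)/2}$ (with $(m-1)/2\in\N$) satisfies $Y^2=X$ and $Y^m=t$, so $Q(B)=k((Y))$ and the integral closure of $B$ is the DVR $k[[Y]]$, which is module-finite over $B$ because $v(B)$ (computed next) has finite complement in $\N$. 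This gives an injection $\varphi:B\hookrightarrow k[[Y]]$, $X\mapsto Y^2$, $t\mapsto Y^m$; injectivity is immediate from parity, since $\varphi(R)$ is supported on even exponents and $\varphi(It)$ on odd ones.

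Next I would compute the value semigroup $v(B)$ for the $Y$-adic valuation. Writing an element of $B$ as $r+it$ with $r\in R$, $i\in I$, one has $v(\varphi(r))=2v_X(r)\in 2\cdot S$ and $v(\varphi(it))=2v_X(i)+m\in 2\cdot v(I)+m$; as even and odd values never collide, $v(r+it)=\min\{v(\varphi(r)),v(\varphi(it))\}$, whence $v(B)=2\cdot S\cup(2\cdot v(I)+m)=S\!\Join^m\!\!v(I)=T$. Thus $A:=\varphi(B)$ and $k[[T]]$ have the same value semigroup, the same normalization $k[[Y]]$, and the same residue field $k$.

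The heart of the matter is upgrading this equality of value semigroups to a ring isomorphism, and here $A$ and $k[[T]]$ are genuinely different as subrings of $k[[Y]]$. The even part of $A$ is exactly $k[[2\cdot S]]$, agreeing with that of $k[[T]]$; the discrepancy lies entirely in the odd part $\varphi(I)\,Y^m$, whose support $2\cdot\mathrm{supp}_S(I)+m$ may strictly exceed the value set $2\cdot v(I)+m$ (for example $I=(X^2+X^3)$ yields $Y^7+Y^9$, and $9\notin T$). This is why one cannot simply send each $Y^s$, $s\in T$, to a monomial of $B$: such an assignment would force $\eta_e^2=X^{2e}$ on the images $\eta_e t$ of the odd generators, i.e. $\eta_e=\pm X^e\in I$, which holds only when $I$ is a monomial ideal. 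For general $I$ the plan is instead to straighten $A$ by an automorphism $\rho$ of $k[[Y]]$, a change of uniformizer $Y\mapsto Y+\sum_{j\ge2}c_jY^j$: applying $\rho$ to a finite set of ring generators of $A$ (the $Y^{2n_i}$ together with $\varphi(i_\ell)Y^m$ for a minimal system $i_1,\dots,i_r$ of $I$), I would kill, one at a time and in increasing order of valuation, every monomial whose exponent is a gap of $T$. Only finitely many such gaps occur, all below the conductor $c(T)$, and each is cleared by solving for one new coefficient $c_j$, with completeness of $k[[Y]]$ ensuring convergence to a genuine automorphism with $\rho(A)\subseteq k[[T]]$. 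I expect this successive-approximation step to be the main obstacle, both because the bookkeeping must guarantee that clearing one gap does not reintroduce lower ones, and because the solvability of the linear conditions may require mild hypotheses on $k$ relative to $m$.

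Finally, once $\rho(A)\subseteq k[[T]]$, I would conclude equality by a conductor–length count. As residually rational subrings of $k[[Y]]$ with value semigroup $T$, both $\rho(A)$ and $k[[T]]$ contain the conductor $Y^{c(T)}k[[Y]]$ and satisfy $\dim_k\big(\,\cdot\,/Y^{c(T)}k[[Y]]\big)=\#\big(T\cap[0,c(T))\big)$, a number depending only on $T$. Hence the inclusion $\rho(A)\subseteq k[[T]]$ is forced to be an equality, and $B\cong A\cong\rho(A)=k[[T]]$, as claimed.
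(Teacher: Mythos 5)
You have in fact uncovered a real problem, and it is worth separating what your proposal gets right from where it (and the paper) breaks down. The paper's entire proof is the map $\Phi(r(X)+i(X)t)=r(X^2)+i(X^2)X^m$ --- your $\varphi$ read inside $k[[X]]$ --- together with the assertion that it is ``easily checked'' to be an isomorphism onto $k[[T]]$. Your support objection applies verbatim to this map: its image is $k[[2\cdot S]]\oplus I(X^2)X^m$, whose odd part lies in $k[[T]]$ only if every $i\in I$ has support contained in $v(I)$. This holds when $I$ is generated by monomials (as in the paper's own example following the theorem, where $I=X^3k[[X^2,X^3]]$), but fails for $I=(X^2+X^3)$, exactly as you say. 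So the difficulty you isolate is not an artifact of your route; it is a gap in the paper, whose argument proves the theorem only for monomial ideals. Your steps 1 and 2 (the embedding $X\mapsto Y^2$, $t\mapsto Y^m$ into the normalization, and $v(B)=T$) and your final conductor--length argument are correct, and they are what any complete proof would need.

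The genuine gap in your proposal is the straightening step, and unfortunately it cannot be closed: the conflict you worry about (rigidity forced by the even part versus the conditions needed on the odd part) is fatal, even in characteristic $0$. Take $k=\mathbb{Q}$, $S=\langle 3,7\rangle$, $I=(X^6+X^7)$, $m=3$, so $v(I)=6+S$, the odd part of $A$ is $(Y^{15}+Y^{17})k[[2\cdot S]]$, and $7,\,8,\,17\notin T$. Any isomorphism $A\to k[[T]]$ extends to the common normalization $k[[Y]]$ (isomorphisms preserve integral closures, and both rings contain $Y^{c(T)}k[[Y]]$), hence is the restriction of a substitution $\sigma\colon Y\mapsto c_1Y+c_2Y^2+c_3Y^3+\cdots$ with $c_1\neq 0$. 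From $\sigma(Y^6)\in k[[T]]$ and $7,8\notin T$ one gets $6c_1^5c_2=0$ and then $6c_1^5c_3=0$, so $c_2=c_3=0$; but then no term of $\sigma(Y)^{15}$ has degree $17$, so the coefficient of $Y^{17}$ in $\sigma(Y^{15}+Y^{17})=\sigma(Y)^{15}+\sigma(Y)^{17}$ is exactly $c_1^{17}\neq 0$, contradicting $17\notin T$. Thus for this $(R,I,b)$ the ring $R(I)_{0,-b}$ is \emph{not} isomorphic to $k[[T]]$: Theorem \ref{Semigroup ring} as stated is false for non-monomial ideals, so no bookkeeping can rescue the successive approximation in general. (In positive characteristic the obstruction appears even for principal examples where characteristic $0$ succeeds: for your $I=(X^2+X^3)$ with $m=5$ the condition to clear the gap $11$ is $9a+1=0$, which becomes $1=0$ in characteristic $3$.) What actually survives is (i) the theorem under the added hypothesis that $I$ is a monomial ideal, where the paper's map, and your argument with no straightening at all, work verbatim; and (ii) the statement of Theorem \ref{Algebroid branch}, which is unaffected because it claims only that the value semigroup of $R(I)_{0,-b}$ is $S\Join^m v(I)$, not that the ring is analytically isomorphic to the semigroup ring.
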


\begin{proof} If $S= \langle n_1, \dots,n_\nu \rangle$, an element of
$R(I)_{0,b}$ is of the form $$r(X)+i(X)t$$ where $r(X)=r(X^{n_1},
\dots,X^{n_\nu}) \in k[[S]]$ and $i(X)=i(X^{n_1}, \dots,X^{n_\nu})
\in I$. Taking into account that we are factoring out the ideal
$(I^2(t^2-X^m))$, we can easily check that the map $\Phi:
R(I)_{0,-b} \rightarrow k[[T]]$, defined by $$\Phi (r(X)+i(X)t)=
r(X^2)+i(X^2)X^m \ ,$$ is an isomorphism of rings.
\end{proof}

\begin{ex} \rm If $R=k[[X^2,X^3]]$, $b=X^5$ and $I= X^3k[[X^2,X^3]]$,
then $R(I)_{0,b} \cong k[[X^4, X^6,X^{11}]]$. According to
Corollary \ref{Gor}, we get a Gorenstein ring (in fact the
semigroup $\langle 4,6,11\rangle$ is symmetric), because the ideal
$I$ is a canonical ideal of $R$.
\end{ex}

We consider now the case of algebroid branches, i.e. local rings
$(R, \mathfrak m)$ of the form $k[[X_1,\dots X_n]]/P$, where $P$
is a prime ideal of height $n-1$ and $k$ is algebraically closed.
We have that $(R,\mathfrak m)$ is a one-dimensional, Noetherian,
complete, local integral domain; moreover, $R$ is analytically
irreducible with integral closure isomorphic to $k[[X]]$ and
$k\subset R$. If we consider the valuation $v$ induced by $k[[X]]$
on $R$, we get again that $v(R)=\{ v(r); \ r\in R, r \neq 0 \}$ is
a numerical semigroup and that $v(I)=\{ v(i); \ i \in I, i \neq 0
\}$ is a semigroup ideal of $v(R)$.

\begin{thm} \label{Algebroid branch}
Let $R$ be an algebroid branch and let $I$ be a proper ideal of
$R$; let $b \in R$, such that $m=v(b)$ is odd. Then $R(I)_{0,-b}$
an algebroid branch and its value semigroup is
$v(R)\!\Join^m\!\!v(I)$.
\end{thm}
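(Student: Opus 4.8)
The plan is to mimic the numerical semigroup case (Theorem~\ref{Semigroup ring}) and realize $R(I)_{0,-b}$ as a subring of a one-variable power series ring, the difference being that here $\overline R\cong k[[X]]$ comes from the analytic irreducibility of $R$ rather than being given explicitly; throughout, $v=\mathrm{ord}_X$ denotes the valuation of $\overline R=k[[X]]$. First I would collect the structural properties. The ring $R(I)_{0,-b}$ is Noetherian and local by the results of Sections~1 and~2, one-dimensional by Proposition~\ref{dim}, and complete because it is a finite module over the complete ring $R$ (recall $R(I)_{0,-b}\cong R\oplus I$ as an $R$-module). It is a domain by point~3) of Remark~\ref{Spec2}: since $v(b)=m$ is odd, $b$ is not a square in $Q(R)$ (any square has even value), so $t^2-b$ is irreducible over $Q(R)$. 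Finally $k\subset R\subset R(I)_{0,-b}$, and it remains only to identify the integral closure.

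The heart of the argument is the integral closure computation. Adjoin a square root $Y$ of $X$, so that $\overline R=k[[X]]=k[[Y^2]]$ and $k[[Y]]$ is the integral closure of $\overline R$ in $k((Y))$, a DVR finite over $\overline R$. By Corollary~\ref{same ring of fractions} (here $I$ is regular by the standing hypothesis of the section) the integral closure of $R(I)_{0,-b}$ equals the integral closure of $R$ in $L:=Q(R)(\sqrt b)$, which by transitivity of integral closure is the integral closure of $\overline R$ in $L$. Writing $b=uX^m$ with $u\in\overline R$ a unit and using that $k$ is algebraically closed, $u$ admits a square root $\sqrt u\in k[[X]]$; hence $\sqrt b=\sqrt u\,X^{(m-1)/2}Y\in k[[Y]]$, so that $L=k((Y))$ and the integral closure of $R(I)_{0,-b}$ is exactly $k[[Y]]$. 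Thus $R(I)_{0,-b}$ is analytically irreducible with integral closure isomorphic to $k[[Y]]$, and together with the previous paragraph this shows it is an algebroid branch.

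It then remains to compute the value semigroup with respect to $v'=\mathrm{ord}_Y$, normalized by $v'(Y)=1$. Since $X=Y^2$ we have $v'|_{Q(R)}=2v$, and from the relation $t^2=b$ we get $v'(t)=\tfrac12 v'(b)=m$. Hence for a nonzero element $r+it$ of $R(I)_{0,-b}$ the two summands have values $v'(r)=2v(r)$, which is even, and $v'(it)=2v(i)+m$, which is odd. Being of opposite parity these are always distinct, so no cancellation of leading terms can occur and $v'(r+it)=\min\{2v(r),\,2v(i)+m\}$. Letting $r$ and $i$ vary over $R$ and $I$, the set of values is
\[
v'\bigl(R(I)_{0,-b}\setminus\{0\}\bigr)=2\cdot v(R)\,\cup\,(2\cdot v(I)+m)=v(R)\!\Join^m\!\!v(I),
\]
which is precisely the numerical duplication.

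The step I expect to be the main obstacle is the integral closure computation: one must carefully justify that the integral closure is a DVR and identify it with $k[[Y]]$, and the use of $k$ algebraically closed to extract $\sqrt u$ is essential here (this is exactly where analytic irreducibility and the hypothesis on $k$ enter). The parity argument in the last paragraph, forcing the valuation of a sum to equal the minimum of the two summands, is the other point that must be stated with care, since it is what makes the value set split cleanly into an even part coming from $R$ and an odd part coming from $It$.
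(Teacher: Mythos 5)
Your proof is correct when $\operatorname{char} k\neq 2$, and its skeleton is the same as the paper's: show $R(I)_{0,-b}$ is a complete, Noetherian, local, one-dimensional domain containing $k$ (hence an algebroid branch by Cohen's structure theorem), identify its integral closure with a power series ring $k[[Y]]$ in which $v'(X)=2$ and $v'(t)=m$, then read off the value semigroup. The genuine difference is in how the integral closure is pinned down. The paper never constructs $Y$: it simply invokes that the integral closure of an algebroid branch is a complete DVR $V\cong k[[Y]]$ with $Q(V)=Q(R)(t)$, and then forces $v'(X)=2$, $v'(t)=m$ \emph{arithmetically}, from $2v'(t)=v'(b)=mv'(X)$ together with the normalization $v'(Y)=1$ (if $v'(X)=1$ then $v'(t)=m/2\notin\Z$, impossible). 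Your route is constructive: you adjoin $Y=\sqrt X$, write $\sqrt b=\sqrt u\,X^{(m-1)/2}Y$, and identify $Q(R(I)_{0,-b})$ with $k((Y))$ via Corollary~\ref{same ring of fractions} and transitivity of integral closure. What your version buys is explicitness (the embedding and the uniformizer are exhibited), and it also supplies in full the parity argument that the paper compresses into ``it is straightforward that $v'(R(I)_{0,-b})=v(R)\!\Join^m\!\!v(I)$''. What the paper's version buys is independence of the characteristic, as explained next.

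The one real caveat in your argument is the extraction of $\sqrt u\in k[[X]]$: this is a Hensel-type lifting for $T^2-u$ and fails in characteristic $2$, where even over an algebraically closed field a unit such as $1+X$ has no square root in $k[[X]]$ (squares in $k[[X]]$ are power series in $X^2$). The theorem itself, and the paper's proof, are characteristic-free. Your argument can be repaired in characteristic $2$ by observing that $k$ is perfect, so $k((X))^{1/2}=k((Y))$ with $Y=X^{1/2}$; then $\sqrt b=\sqrt u\,Y^{m}$ with $\sqrt u$ a unit of $k[[Y]]$ (rather than of $k[[X]]$), one still gets $L=k((Y))$, integral closure $k[[Y]]$, and $v'(X)=2$, $v'(t)=m$, after which your parity computation goes through unchanged.
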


\begin{proof} Since $v(b)$ is odd, by Proposition \ref{prop:x^n-b}, $t^2-b$ is irreducible in $Q(R)[t]$ and
$R(I)_{0,-b}$ is an integral domain. Moreover, applying the
results of the previous sections, we know that $R(I)_{0,-b}$ is
local (we will denote by $M$ its maximal ideal), Noetherian and
one-dimensional. It is not difficult to check that the $\mathfrak
m$-adic topology on the $R$-module $R(I)_{0,-b}$ coincide with the
$M$-adic topology, hence it is complete. Since $R(I)_{0,-b}$
contains its residue field $k$, by Cohen structure theorem, it is
of the form $k[[Y_1,\dots,Y_l]]/Q$, for some prime ideal $Q$ of
height $l-1$; so it is an algebroid branch.

Let $V=k[[Y]]$ be the integral closure of $R(I)_{0,-b}$ in its
quotient field $Q(R(I)_{0,-b})=Q(R)(t)=k((Y))$. We denote by $v'$
the valuation associated to $k[[Y]]$; in particular $v'(Y)=1$.
Since $Q(R)=k((X))$, we have $k((Y))=k((X))(t)$; moreover, $t^2=b$
implies that $2v'(t)=v'(b)=mv'(X)$. In order to obtain $v'(Y)=1$
it is necessary that $v'(t)=m$ and $v'(X)=2$.

Now, it is straightforward that
$v'(R(I)_{0,-b})=v(R)\!\Join^m\!\!v(I)$.
\end{proof}

\end{document}